\providecommand{\U}[1]{\protect\rule{.1in}{.1in}}
\newtheorem{theorem}{Theorem}[section]
\newtheorem{lemma}[theorem]{Lemma}
\newtheorem{proposition}[theorem]{Proposition}
\newenvironment{proof}[1][Proof]{\noindent\textbf{#1.} }{\ \rule{0.5em}{0.5em}}
\begin{document}

\title{On some relatively free pro-$p$ groups}
\author{Dan Segal}
\maketitle

\section{Introduction}

A group $G$ is \emph{finitely axiomatizable} (or \emph{FA}) in some class
$\mathcal{C}$ if $G$ is determined up to isomorphism among all groups in
$\mathcal{C}$ by a single sentence in the first-order theory of groups (or,
equivalently, by finitely many such sentences). When $\mathcal{C}$ is a class
of profinite groups, \emph{isomorphism} is taken to mean topological
isomorphism (but the language of groups is the usual one, with no topological
features). See \cite{NST} for some background. That paper raises the question:
\emph{is a finitely generated free pro}-$p$ \emph{group FA in the class of
pro-}$p$\emph{ groups, or in the class of all profinite groups}? This is still
open; a small step in that direction is achieved in \cite{M}, which considers
relatively free metabelian pro-$p$ groups. Here we take another small step with

\begin{theorem}
\label{mainthm}The free $2$-generator centre-by-metabelian pro-$p$ group and
the free $2$-generator $\mathfrak{N}_{2}$-by-abelian pro-$p$ group are each FA
among all profinite groups.
\end{theorem}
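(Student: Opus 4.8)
The plan is to treat the two varieties together, since they differ only in the top two layers. Write $G$ for the free group in question and form the characteristic chain $G\supseteq G'\supseteq G''\supseteq 1$, with graded quotients $A=G/G'$, $M=G'/G''$ and $C=G''$. Killing $G''$ produces the free $2$-generator metabelian pro-$p$ group, so $A\cong\mathbb Z_p^2$ and, by Fox--Magnus theory, $M$ is the syzygy module of $(s,t)$ over $R=\mathbb Z_p[[A]]\cong\mathbb Z_p[[s,t]]$; since $s,t$ is a regular sequence this syzygy is \emph{free of rank one}, say $M=Rm_0$. This rank-one phenomenon is special to two generators and is what makes the theorem accessible. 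The commutator induces a $\mathbb Z_p$-bilinear alternating pairing $\beta\colon M\times M\to C$ (well defined because $[G'',G']=1$), and as $G$ is relatively free, $C$ is the \emph{universal} receiver of such a pairing: $A$-equivariant, $\beta(am,am')=a\,\beta(m,m')$, in the $\mathfrak N_2$-by-abelian case, and moreover $A$-invariant (so $C\le Z(G)$) in the centre-by-metabelian case. Thus $G$ is encoded by the tuple $(A,M,C,\beta)$, and the task is to recover this tuple by finitely many sentences and then to show it determines $G$.

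First I would fix the definability of the ingredients. In a finitely generated profinite group the verbal subgroups $G'$ and $G''$ are closed and are bounded products of word-values, hence first-order definable, while $Z(G)$ and $Z(G')$ are definable outright as centralizers; consequently the quotients $A$, $M$, the ring action, and the pairing $\beta$ are all interpretable. Adapting the analysis of the metabelian case in \cite{M}, I would then arrange that a single sentence, relativized to the interpreted quotient $H/H''$ of an arbitrary profinite test group $H$, forces $H/H''$ to be the free $2$-generator metabelian pro-$p$ group. This simultaneously pins down $A\cong\mathbb Z_p^2$, the ring $R$ (as the image of $\mathbb Z_p[[A]]$ acting faithfully on $M$), the free rank-one module $M=Rm_0$, and the copy of $\mathbb Z_p$ inside $R$; in effect the whole metabelian quotient is imported.

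The crux is to force $\beta$ to induce an \emph{isomorphism} from $\wedge^2_{\mathbb Z_p}M$ (respectively its $A$-coinvariants, in the centre-by-metabelian case) onto $H''$: surjectivity is automatic, since $H''=[H',H']$ is generated by the pairing, while injectivity is exactly the assertion that $H''$ carries no relations beyond bilinearity, alternation and $A$-equivariance. Here I would exploit that $M=Rm_0$ is free of rank one, so that every pair in $M$ has the form $(rm_0,r'm_0)$ and the entire pairing is the single function $(r,r')\mapsto\beta(rm_0,r'm_0)$ on the interpreted ring $R$. Injectivity then says such an expression is trivial only for the expected reasons, and in the torsion-free rank-one setting the governing relation is $\mathbb Z_p$-linear dependence, which is first-order over the interpreted pair $(\mathbb Z_p,R)$. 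I would therefore assert, for boundedly many terms, that $\prod_i\beta(r_im_0,r_i'm_0)=1$ forces the exterior combination $\sum_i r_i\wedge r_i'$ to vanish — translated into a linear-dependence statement over $\mathbb Z_p$ — together with $A$-invariance of $\beta$ in the centre-by-metabelian case. I expect this to be the main obstacle: one must compute $C$ explicitly from $M\cong R$, show that a bounded number of terms suffices to detect all relations (using Noetherianity of $R$), and verify that $G$ itself satisfies the resulting sentences.

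The remaining steps are formal. The relativized sentence already gives $H/H'\cong\mathbb Z_p^2$ and $H'/H''\cong R$, both pro-$p$, and the isomorphism $\wedge^2_{\mathbb Z_p}M\cong H''$ makes $H''$ pro-$p$ as well; hence $H$ is pro-$p$, $H/\Phi(H)\cong\mathbb F_p^2$, and $H$ is automatically $2$-generated, so the variety law furnishes a continuous surjection $\pi\colon G\to H$. By naturality of the commutator pairing, $\pi$ induces isomorphisms on each graded quotient $A$, $M$ and $C$ of the chain $G\ge G'\ge G''\ge 1$; a surjection inducing isomorphisms on all quotients of a finite filtration is itself an isomorphism, whence $H\cong G$. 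The argument is uniform across the two varieties, the only difference being whether one imposes $A$-equivariance alone or also $A$-invariance on $\beta$; and since the prime $p$ enters only through the forced pro-$p$ structure of the three layers, the characterization holds among all profinite groups.
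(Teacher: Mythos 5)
There is a genuine gap, and it occurs at the two places where the model theory actually has to be done. First, your definability claims fail for $G''$. In these groups the \emph{abstract} second derived subgroup is not closed: $G'=D$ is an infinitely generated free pro-$p$ group, so the Nikolov--Segal bounded-width results (which apply to $G'$ in a finitely generated group) give nothing for $[G',G']$, and indeed the paper must work throughout with the closure $D_2=\overline{D'}$ precisely because $D'$ itself is not closed. Consequently $H''$ is not a definable subgroup of a test group $H$, and your plan to ``relativize to the interpreted quotient $H/H''$'' has no first-order meaning. The paper circumvents this by never mentioning $G''$: its axioms are phrased via $\mathrm{Z}(G)$ and $\mathrm{Z}(G')$ (axioms A2, B2--B3, C3), which are honest centralizers, together with a condition like $[D^{\ast},D^{\ast}]\subseteq M=\mathrm{Z}(D^{\ast})$ to tie the centre back to the derived structure.

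Second, the step you yourself flag as the crux --- forcing $\beta$ to induce an \emph{injection} of $\wedge^{2}_{\mathbb{Z}_p}M$ (or its $A$-coinvariants) onto $H''$ --- is not resolved, and the proposed mechanism would not work. A relation in the exterior square is an arbitrary finite sum $\sum_i r_i\wedge r_i'$ of unbounded length (indeed, in the completed setting, a limit of such sums), and first-order logic over the group can only quantify over group elements, not over formal sums. Your appeal to Noetherianity of $R$ to bound the number of terms has no force: the relevant module is $M_2\cong S(1-\gamma)$ with $S=\mathbb{Z}_p[[\xi_1,\eta_1,\zeta,\tau]]$, which is \emph{not} finitely generated over $R=\mathbb{Z}_p[[\zeta,\tau]]$, so no uniform bound on relation length is available. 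The paper's substitute for this universal-property axiom is entirely different and is the real content of the proof: for $F/Q$ it uses the torsion axiom $Z^2\neq1$ together with the centralizer-rigidity axiom A4, whose adequacy rests on Proposition \ref{P2} (the two abelian subgroups $C/Q$ and $N/Q$ with $C\cap N=D_2$ covering $D^2D_2/D_2$, constructed from the $\pm$-eigenspaces of the involution $\ast$ on $R$); for $F/D_3$ it uses the element-wise module conditions B4, B5 ($a\mapsto[a,x]$, $a\mapsto[a,y]$ injective on $M$, and $[M,x]\cap[M,y]=[M,x,y]$), plus axiom B3 which \emph{imports the already-proved centre-by-metabelian case}, and then a short computation showing that any admissible kernel satisfies $\widetilde{K}=[\widetilde{K},\widetilde{F}]$ and hence vanishes. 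All of these quantify only over elements, which is what makes them first-order. Note also that this makes the two cases asymmetric (the second bootstraps off the first), and that the passage from pro-$p$ to all profinite groups in the $\mathfrak{N}_2$-by-abelian case needs a further module-theoretic criterion (Lemma \ref{prof}) to force $M$ to be pro-$p$, not merely the observation that the graded layers of the target group are pro-$p$.
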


($\mathfrak{N}_{2}$ means `nilpotent of class $2$'). It is conceivable that
similar methods would extend this result to all finitely generated
(non-procyclic) free $\mathfrak{N}_{c}$-by-abelian pro-$p$ groups with
$c\geq2$, but even this seems quite difficult; and it would probably not
suffice to answer the original question about absolutely free pro-$p$ groups.

\subsection{Group-theoretic results\label{gtr}}

We fix some notation and make some preliminary remarks.

$F$ denotes the free pro-$p$ group on generators $x,y$. The derived group of
$F$ is%
\[
D=F^{\prime}=[F,x][F,y];
\]
this is a free pro-$p$ group with the subspace topology from $F$ (Lemma
\ref{xy} below, and \cite{RZ}, Cor. 7.7.5).

By abuse of notation, I shall use $x,~y$ also to denote their images in
whatever ambient group they occur - what this means should be clear from the context.

$D_{2}=\overline{D^{\prime}}$ is the closure of the derived group of $D$, and
$F/D_{2}$ is the free metabelian pro-$p$ group on $x$ and $y$.
\[
Q=[D_{2},F]=[D_{2},x][D_{2},y]
\]
is closed (Lemma \ref{xy} below), and $F/Q$ is the free centre-by-metabelian
pro-$p$ group on $x$ and $y$.

$D_{3}=\overline{\gamma_{3}(D)}=\overline{[D_{2},D]}<Q$, and $F/D_{3}$ is the
free $\mathfrak{N}_{2}$-by-abelian pro-$p$ group on $x$ and $y$. Note that
$D/D_{3}$ is a relatively free $\mathfrak{N}_{2}$ pro-$p$ group, which implies
that $D_{2}/D_{3}=\mathrm{Z}(D/D_{3})$.

$X=\overline{\left\langle x,y\right\rangle }$ is the free abelian pro-$p$
group on $x$ and $y$.
\[
R=\mathbb{Z}_{p}[[X]]
\]
is the completed group algebra of $X$. \ We identify $X$ with $F/D$ and
consider $M_{1}:=D/D_{2}$ and $M_{2}:=D_{2}/D_{3}$ as ~$R$-modules. Note that
for each $i$ we have%
\[
\lbrack M_{i},F]=M_{i}(X-1)=M_{i}(x-1)+M_{i}(y-1).
\]
Set%
\[
u=[y,x].
\]
Then $D$ is topologically generated by conjugates of $u$, so%
\begin{equation}
D_{2}=[D,\,u]\cdot\lbrack D_{2},F]=[D,\,u]\cdot Q. \label{genD_2modQ}%
\end{equation}

The next two propositions are established in later sections:

\begin{proposition}
\label{P1}\emph{(i) }$M_{1}$ is the free cyclic $R$-module on $uD_{2}$.

\emph{(ii) }$M_{2}$ is torsion-free as an $R$-module.

\emph{(iii)} For $i=1,~2$ we have%
\[
M_{i}(x-1)\cap M_{i}(y-1)=M_{i}(x-1)(y-1).
\]

\emph{(iv)}%
\[%
{\displaystyle\bigcap\limits_{1\neq g\in X}}
M_{i}(g-1)=0.
\]

\emph{(v)} $M_{2}/[M_{2},F]$ is torsion-free as a $\mathbb{Z}_{p}$-module.
\end{proposition}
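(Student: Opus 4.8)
The plan is to dispose of part (i) by the pro-$p$ Magnus embedding, and then to reduce parts (ii)--(v) to a single structural description of $M_2$ as an $R$-module. Throughout write $s=x-1$, $t=y-1$, so that $R\cong\mathbb{Z}_p[[s,t]]$ is a regular local UFD in which $s,t$ is a coprime regular sequence. For (i): since $D$ is topologically generated by the conjugates of $u$, the module $M_1=D/D_2$ is cyclic, $M_1=R\bar u$, so freeness amounts to $\mathrm{Ann}_R(\bar u)=0$. I would invoke the free differential calculus in the pro-$p$ setting, giving the Crowell exact sequence $0\to M_1\to R^2\xrightarrow{(s,t)^{\mathrm{T}}}R\to\mathbb{Z}_p\to0$ attached to $F\twoheadrightarrow X$; a direct Fox-derivative computation yields $\bar u\mapsto(t,-s)$ up to a unit. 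As $R^2$ is torsion-free and the image is nonzero, $\mathrm{Ann}_R(\bar u)=0$ and $M_1\cong R$ is free cyclic on $\bar u$. (The image is then exactly the syzygy module of $(s,t)$, namely $R\cdot(t,-s)$ since $R$ is a UFD with $s,t$ coprime.)

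The engine for the rest is the following. Because $D/D_3$ is relatively free $\mathfrak{N}_2$ with abelianization $M_1$, the commutator map induces an $R$-isomorphism $M_2=D_2/D_3\cong\bigwedge^2_{\mathbb{Z}_p}M_1$ on which $X$ acts diagonally, $g\cdot(a\wedge b)=ga\wedge gb$. Set $P:=M_1\,\widehat{\otimes}_{\mathbb{Z}_p}M_1$; using $M_1\cong R$ we get $P\cong\mathbb{Z}_p[[X\times X]]$, and the internal decomposition $X\times X=\Delta X\times(X\times1)$ exhibits $P$ as a \emph{free} module over the diagonal $R=\mathbb{Z}_p[[\Delta X]]$, with $s,t$ acting as the diagonal augmentation elements. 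Let $\sigma$ be the $R$-linear involution swapping the two tensor factors; in the coordinates of $\mathbb{Z}_p[[X\times X]]$ it simply interchanges the two families of variables, hence permutes the monomial basis. Then $\bigwedge^2_{\mathbb{Z}_p}M_1=P/P^{+}$, where $P^{+}=\ker(1-\sigma)$ is the closed span of the symmetric tensors, so $M_2\cong\mathrm{im}(1-\sigma)=:W\subseteq P$. The key point, \emph{uniform in $p$}, is that $W=P^{-}:=\ker(1+\sigma)$: an antisymmetric power series has vanishing coefficient on every $\sigma$-fixed monomial (that coefficient equals its own negative, and $\mathbb{Z}_p$ is torsion-free), so it is a convergent combination of terms $m-\sigma m$.

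With $M_2\cong W=P^{-}$ realized as a submodule of the free domain $P$, parts (ii)--(iv) are formal. (ii) is immediate, a submodule of a torsion-free module being torsion-free. In (iii) the inclusion $\supseteq$ is clear by commutativity; for $\subseteq$ with $i=2$, transport to $W$: if $w_1s=w_2t$ lies in $Ws\cap Wt$ then it lies in $Ps\cap Pt=Pst$ (coprime regular sequence in $P$), so it equals $vst$ with $w_1=vt$, $w_2=vs$; then $vt\in W=\ker(1+\sigma)$ gives $(1+\sigma)v\cdot t=0$, whence $(1+\sigma)v=0$ and $v\in W$, i.e. $w\in Wst$. For $i=1$ this is the UFD fact $Rs\cap Rt=Rst$. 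For (iv), $\bigcap_{g\neq1}W(g-1)\subseteq\bigcap_{g\neq1}P(g-1)=0$, since in the UFD $P$ no nonzero element is divisible by $g-1$ for the infinitely many non-associate primes arising from primitive directions $g=x^ay^b$; the identical argument inside $R$ settles $i=1$.

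Finally (v): as $[M_2,F]=M_2I$ with $I=(s,t)$, the module is $M_2\otimes_R\mathbb{Z}_p$. Applying $-\otimes_R\mathbb{Z}_p$ to $0\to P^{+}\to P\to M_2\to0$ and using $P\otimes_R\mathbb{Z}_p\cong\mathbb{Z}_p[[(X\times X)/\Delta X]]\cong\mathbb{Z}_p[[X]]=:R'$ via $(a,b)\mapsto ab^{-1}$, the involution $\sigma$ descends to inversion $\iota\colon g\mapsto g^{-1}$, and the image of $P^{+}$ is exactly $\ker(1-\iota)$ (generated by the elements $g+g^{-1}$). Hence $M_2\otimes_R\mathbb{Z}_p\cong R'/\ker(1-\iota)\cong\mathrm{im}(1-\iota)\subseteq R'$, a submodule of the free $\mathbb{Z}_p$-module $\mathbb{Z}_p[[X]]$, so it is $\mathbb{Z}_p$-torsion-free. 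The \textbf{main obstacle} is not any single computation but setting up the two structural inputs rigorously in the completed category: the faithfulness of the pro-$p$ Magnus embedding in (i), and the identification $M_2\cong\bigwedge^2_{\mathbb{Z}_p}M_1$ together with the realization $P^{-}=\mathrm{im}(1-\sigma)$ inside the free module $P$. Once these and the freeness/UFD properties of $P$ over the diagonal $R$ are secured, parts (ii)--(v) follow uniformly, the potentially troublesome case $p=2$ causing no difficulty precisely because $\mathbb{Z}_p$ is torsion-free.
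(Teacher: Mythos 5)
Your parts (i)--(iv) are sound and track the paper's route quite closely: for (i) the paper simply quotes \cite{M}, Prop.~8, which is the pro-$p$ Magnus/Fox-calculus fact you sketch; your realization of $M_2$ inside $P\cong\mathbb{Z}_p[[X\times X]]$ as the antisymmetric part under the factor-swap $\sigma$, with $X$ acting diagonally, is exactly the paper's embedding $\Theta$ (Proposition \ref{KZprop}, resting on the same free-Lie-inside-free-associative input, which you correctly flag); and your (ii) and (iv) reproduce Lemma \ref{Sasmodule} (torsion-freeness via the domain/free structure over the diagonal copy of $R$, and the ``infinitely many non-associate primes $\widehat{g}-1$'' argument). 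Your (iii) is genuinely \emph{simpler} than the paper's: the observation that $\mathrm{im}(1-\sigma)=\ker(1+\sigma)$ in $P$ --- legitimate, since $\sigma$ permutes the monomials of $\mathbb{Z}_p[[\xi_{1},\eta_{1},\xi_{2},\eta_{2}]]$ and $\mathbb{Z}_p$ is torsion-free --- lets you cancel the $\sigma$-fixed elements $\zeta,\tau$ inside the domain $P$ and so bypass identity (\ref{Szeta}) entirely, which the paper has to prove via the equivariant operator $\Psi=\frac12(\Psi_{1}+\Psi_{2})$ of Subsection \ref{finalsec}. That kernel trick even recovers the stronger form of Proposition \ref{MxiintersectMeta} ($a=c(y-1)$, $b=c(x-1)$).

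Part (v), however, contains a genuine gap, located exactly at the parenthetical claim that the image of $P^{+}$ in $R'=\mathbb{Z}_p[[X]]$ is all of $\ker(1-\iota)$ (``generated by the elements $g+g^{-1}$''). For odd $p$ this is trivial (lift $f\in\ker(1-\iota)$ to $h\in P$ and replace $h$ by $\frac12(1+\sigma)h\in P^{+}$), but for $p=2$ your monomial-permutation argument does not transfer: $\iota$ does not permute any topological $\mathbb{Z}_p$-basis of $R'$ --- in the coordinates $x-1,\,y-1$ it is not a monomial map, and on the finite levels $\mathbb{Z}_2[X/U]$, where it does permute the group basis, it has nontrivial fixed points, so ``invariants are the closed span of orbit sums'' is false level-by-level and needs a real argument in the limit. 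Nor is this a peripheral detail: granted your (unconditional) isomorphism $M_{2}\otimes_{R}\mathbb{Z}_p\cong R'/\mathrm{im}(P^{+})$, the missing equality is equivalent to the paper's identity (\ref{MDelta}), $S(1-\gamma)\cap\Delta=\Delta(1-\gamma)$, which is precisely the one place where the paper works hardest: Lemma \ref{RsmeetDelta} is proved by a two-step bootstrap, first applying Lemma \ref{SsandSzeta} (via $\Psi$) modulo $R\tau$ and then using $R\mathbf{s}\cap R\tau=R\mathbf{s}\tau$. Note that your closing slogan --- that $p=2$ causes no trouble ``because $\mathbb{Z}_p$ is torsion-free'' --- is exactly what fails here: torsion-freeness identifies kernel and image for the monomial action of $\sigma$ on $P$, but says nothing about $\iota$ on $R'$. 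The rest of your (v) (right-exactness of $-\otimes_{R}\mathbb{Z}_p$, the identification $P\otimes_{R}\mathbb{Z}_p\cong R'$, the descent of $\sigma$ to inversion) is fine, so the repair is to actually prove $\ker(1-\iota)\subseteq\mathrm{im}(P^{+})$ when $p=2$, e.g.\ by an analogue of the paper's $\Psi$-argument.
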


Part (i) implies (the presumably well-known fact) that $F/D_{2}$ has trivial
centre .

\begin{proposition}
\label{P2}Let $C/Q=\mathrm{C}_{D/Q}(uQ).$ Then

\emph{(i) }$C/Q=\mathrm{C}_{D/Q}(aQ)$ for every $a\in C\smallsetminus D_{2}$,

\emph{(ii) }there exists $N<D$ with $C\cap N=D_{2}$ such that%
\[
N/Q=\mathrm{C}_{D/Q}(bQ)\text{ for every }b\in N\smallsetminus D_{2}%
\]
and
\begin{equation}
\frac{C}{D_{2}}\times\frac{N}{D_{2}}\supseteq\frac{D^{2}D_{2}}{D_{2}}.
\label{CXN}%
\end{equation}

\end{proposition}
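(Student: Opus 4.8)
The plan is to encode all the commutator information of $D/Q$ into a single alternating form and then read both parts off the ring structure of $R$. Since $D/D_3$ is free $\mathfrak{N}_2$, the map $(a,b)\mapsto[a,b]D_3$ is $\mathbb{Z}_p$-bilinear, alternating and $X$-equivariant, and it identifies $M_2$ with the (completed) exterior square $\Lambda^2_{\mathbb{Z}_p}(M_1)$ under the diagonal $X$-action. For $d,a\in D$ one has $[d,a]\in Q$ exactly when the image of $[d,a]$ in $\bar M_2:=M_2/[M_2,F]$ is zero; writing $\bar\beta\colon M_1\times M_1\to\bar M_2$ for the induced form --- now $X$-\emph{invariant}, since $X$ acts trivially on the coinvariants $\bar M_2$ --- the centraliser $\mathrm{C}_{D/Q}(aQ)$ is just the preimage of $\bar a^{\perp}=\{m\in M_1:\bar\beta(m,\bar a)=0\}$, and it always contains $D_2/Q$. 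Thus (i) and (ii) turn into assertions about perpendicular spaces of $\bar\beta$.

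My next, and I expect hardest, step is to make $\bar M_2$ and $\bar\beta$ fully explicit. Identifying $M_1$ with $R$ via $uD_2\mapsto1$ (Proposition \ref{P1}(i)), so that $M_2\cong\Lambda^2_{\mathbb{Z}_p}(R)$, I would compute the diagonal coinvariants of $R\otimes_{\mathbb{Z}_p}R$ by $a\otimes b\mapsto a^{*}b$, where $*$ is the ring involution of $R$ induced by $g\mapsto g^{-1}$; on the exterior square this yields a $\mathbb{Z}_p$-isomorphism
\[
\bar M_2=(\Lambda^2R)_X\;\cong\;R/R^{+},\qquad R^{+}:=\{\rho\in R:\rho^{*}=\rho\},
\]
under which $\bar\beta(\rho\,\bar u,\sigma\,\bar u)=\rho^{*}\sigma+R^{+}$. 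The point is that the symmetric tensors map onto the closed span of the norms $a^{*}a$, and that this span is all of $R^{+}$: each $a^{*}a$ is symmetric, while $1=g^{*}g$ and $g+g^{-1}=(1+g)^{*}(1+g)-2$ lie in it, so every symmetric element does. (Proposition \ref{P1}(ii),(v) supply the torsion-freeness that prevents any further collapse.) Everything after this is formal, and it is here that the prime $2$ begins to matter.

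For (i): since $R^{+}$ is $*$-stable, $C/D_2=\bar u^{\perp}=\{\rho:\rho^{*}\in R^{+}\}=R^{+}$. If $a\in C\smallsetminus D_2$ then $\bar a=\alpha\,\bar u$ with $0\neq\alpha\in R^{+}$, and $\bar a^{\perp}=\{\rho:\rho^{*}\alpha\in R^{+}\}$. If $\rho^{*}\alpha\in R^{+}$ then $\rho^{*}\alpha=(\rho^{*}\alpha)^{*}=\rho\alpha$, whence $(\rho^{*}-\rho)\alpha=0$; as $R$ is an integral domain and $\alpha\neq0$ this gives $\rho^{*}=\rho$, i.e. $\rho\in R^{+}$. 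The reverse inclusion is immediate from $R^{+}R^{+}\subseteq R^{+}$, so $\bar a^{\perp}=R^{+}=\bar u^{\perp}$, which is exactly $\mathrm{C}_{D/Q}(aQ)=C/Q$.

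For (ii): I would take $N$ to be the preimage in $D$ of $R^{-}=\{\rho:\rho^{*}=-\rho\}$. The same domain argument, run with $\beta\in R^{-}\smallsetminus\{0\}$, gives $\bar b^{\perp}=R^{-}$ for every $b\in N\smallsetminus D_2$, so $N/Q=\mathrm{C}_{D/Q}(bQ)$; and $R^{+}\cap R^{-}=0$ (no $2$-torsion in the domain $R$) gives $C\cap N=D_2$. Finally $D^{2}D_2/D_2=2M_1$, and $2\rho=(\rho-\rho^{*})+(\rho+\rho^{*})$ with $\rho-\rho^{*}\in R^{-}$, $\rho+\rho^{*}\in R^{+}$, so $\tfrac{C}{D_2}\cdot\tfrac{N}{D_2}=R^{+}+R^{-}\supseteq2M_1$, which is (\ref{CXN}). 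The reason only $D^{2}$ --- not all of $M_1$ --- is captured is exactly the prime $2$: for odd $p$ one has $R=R^{+}\oplus R^{-}$ and the containment is an equality, but at $p=2$ the involution no longer splits $R$ and only the even part survives, which is why the factor $D^{2}$ is forced upon the statement.
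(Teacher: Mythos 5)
Your endgame coincides with the paper's own proof: the paper too identifies $D/D_{2}$ with $R$ via Proposition \ref{P1}(i), defines $C/D_{2}$ and $N/D_{2}$ as the $*$-symmetric and $*$-antisymmetric parts (its $R_{+},R_{-}$ are your $R^{+},R^{-}$), and runs exactly your integral-domain computation for (i) and (ii), with $R^{+}\cap R^{-}=0$ and $2R\subseteq R^{+}+R^{-}$ giving (\ref{CXN}). The real difference is upstream, in how the pairing into $\bar{M}_{2}:=M_{2}/[M_{2},F]$ is made explicit. The paper realises $\bar{M}_{2}$ as a \emph{subgroup} of $R$: it embeds $M_{2}$ into $A\widehat{\otimes}A$ by Proposition \ref{KZprop} (cited from \cite{KZ}) and then applies $\pi$, whose injectivity on $\bar{M}_{2}$ is the intersection identity (\ref{MDelta}), proved in Subsection \ref{finalsec} by the averaging operator $\Psi$. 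You instead realise $\bar{M}_{2}$ as a \emph{quotient} $R/V$, with $V$ the closed span of the norms $a^{*}a$, using $M_{2}\cong\Lambda^{2}(M_{1})$ (completed) --- an unproved input, but essentially the same fact the paper imports from \cite{KZ} --- and right-exactness of coinvariants. If completed, your route bypasses Subsection \ref{finalsec} entirely; but then everything rides on the identification $V=R^{+}$.

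That identification is where you have a genuine gap. Your argument --- ``$1=g^{*}g$ and $g+g^{-1}=(1+g)^{*}(1+g)-2$ lie in $V$, so every symmetric element does'' --- is valid only when $2$ is invertible: for odd $p$, $r=\tfrac{1}{2}(r+r^{*})\in(1+*)R\subseteq V$. For $p=2$ the inference presupposes that $R^{+}$ is the closed span of $\{1\}\cup\{g+g^{-1}:g\in X\}$, and that is precisely the delicate point: in every finite quotient $(\mathbb{Z}/2^{n})[X/X^{2^{m}}]$ the analogous statement is \emph{false}, since an element $g$ of order $2$ there is $*$-fixed while every element of that span (indeed of the span of all norms) has even coefficient at $g$. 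So no formal manipulation of generators can give $R^{+}\subseteq V$; the statement is true for $R$ only because $X$ is torsion-free, and it needs a genuine limit (or torsion-freeness) argument. Since $p=2$ is exactly the case that forces $D^{2}$ into the statement of the proposition, this cannot be waved off --- and your own remark that the prime $2$ ``begins to matter'' only \emph{after} this step indicates you took it to be formal. The gap is repairable with ingredients you already cite: $2R^{+}=(1+*)R^{+}\subseteq(1+*)R\subseteq V$, and your isomorphism $\bar{M}_{2}\cong R/V$ (which does not presuppose $V=R^{+}$) combined with Proposition \ref{P1}(v) shows $R/V$ is torsion-free as a $\mathbb{Z}_{p}$-module, whence $R^{+}\subseteq V$. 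With that paragraph inserted, your proof is correct.
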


We often use the fact that f.g. profinite groups are \emph{Hopfian}
(\cite{RZ}, Prop. 2.5.2), and the following

\begin{lemma}
\label{xy}If $N$ is a closed normal subgroup of a pro-$p$ group $G=\overline
{\left\langle x,y\right\rangle }$ then
\begin{equation}
\lbrack N,G]=[N,x][N,y]. \label{comdef2}%
\end{equation}

\end{lemma}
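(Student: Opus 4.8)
The plan is to reduce to finite quotients and then argue inside a finite $p$-group by exploiting $2$-generation together with nilpotency. First I would note that it suffices to prove the identity in every finite quotient $G/U$ ($U$ open normal in $G$): writing $A=\overline{[N,x]}$ and $B=\overline{[N,y]}$, the identity $[n_1n_2,x]=[n_1,x]^{n_2}[n_2,x]$ shows that $A$ and $B$ are each normal in $N$, so (by compactness) the product $P:=AB$ is a closed subgroup with $P\subseteq[N,G]$; and if $[N,G]U=PU$ for every open normal $U$ then intersecting over $U$ gives $[N,G]=P$. Thus I may assume $G$ is a finite $p$-group, with $A,B\trianglelefteq N$, $P=AB$ a subgroup, and $P\subseteq[N,G]$; the task is the reverse inclusion $[N,G]\subseteq P$.

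The key step works modulo $[N,G,G]:=[[N,G],G]$. In $\bar G=G/[N,G,G]$ the image of $[N,G]$ is central, so for fixed $\bar n$ the map $\bar g\mapsto[\bar n,\bar g]$ is a homomorphism into an abelian group; since $\bar G=\langle \bar x,\bar y\rangle$ its image is generated by $[\bar n,\bar x]$ and $[\bar n,\bar y]$. Letting $\bar n$ vary and using that all these commutators are central, I get $[\bar N,\bar G]=[\bar N,\bar x][\bar N,\bar y]$, which lifts to
\[
[N,G]=[N,x][N,y]\cdot[N,G,G]=P\cdot[N,G,G].
\]

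To remove the error term I would iterate. The same identity applied to the normal subgroup $H:=[N,G]\le N$ gives $[H,G]\subseteq [H,x][H,y]\cdot[H,G,G]$, and since $H\le N$ one has $[H,x]\subseteq A$ and $[H,y]\subseteq B$, whence $[H,x][H,y]\subseteq P$. Thus $[H,G]\subseteq P\cdot[H,G,G]$, and repeating this with the lower terms $H_k=[N,\underbrace{G,\dots,G}_{k}]$ yields $H_k\subseteq P\cdot H_{k+1}$ for every $k\ge1$. Because $P$ is a subgroup, $P\cdot P=P$, so telescoping gives $[N,G]=H_1\subseteq P\cdot H_k$ for all $k$; and since $G$ is nilpotent, $H_k\le\gamma_{k+1}(G)=1$ for $k$ large, forcing $[N,G]\subseteq P$. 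Combined with $P\subseteq[N,G]$ this finishes the finite case, and the inverse-limit remark finishes the proof.

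I expect the main obstacle to be organizing the argument so as to avoid proving that $P=[N,x][N,y]$ is normal in $G$ — which it need not obviously be, since $x$ does not normalize $[N,y]$ (conjugating $[n,y]$ by $x$ introduces the factor $[y,x]$, and there is no reason for $[n,[y,x]]$ to lie in $P$). The device above sidesteps this: I only ever use that $P$ is a subgroup (guaranteed by $A,B\trianglelefteq N$) and that commutators become bilinear modulo $[N,G,G]$, letting nilpotency close the gap through iteration rather than through a direct normality computation.
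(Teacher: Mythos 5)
Your argument is internally coherent, but it proves a weaker statement than the one the paper needs, because it silently changes the meaning of $[N,x]$. Throughout you treat $A=[N,x]$ and $B=[N,y]$ as the closed \emph{subgroups generated} by the commutators: that is the only reading under which your claims ``$A$ and $B$ are normal in $N$'', ``$P:=AB$ is a subgroup'', and above all ``$P\cdot P=P$'' are available. In this paper, however (and in \cite{W}, Ex.\ 1.2.3, which is the entirety of the paper's own proof), $[N,x]$ denotes the commutator \emph{set} $\{[n,x]\mid n\in N\}$, and the content of the lemma is a width statement: every element of $[N,G]$ is a product of exactly two commutators, $[n,x][m,y]$. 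This stronger reading is not optional. It is precisely what makes $G'=[G,x][G,y]$ a first-order definable set, which is how Lemma \ref{xy} is invoked to justify expressibility of Axiom A4, of the B-axioms (``$D^{\ast}:=[G,x][G,y]$ \dots\ is equal to $G'$''), and of Axiom C1 (``the \emph{set} $[G,x][G,y]$ contains every product of $3$ commutators''). Your subgroup version does give closedness of $[N,G]$, but not definability, so it cannot serve the paper's purposes.

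Under the intended set reading the proof breaks exactly at the absorption steps. The set $\{[n,x]\mid n\in N\}$ is in general neither a subgroup nor stable under conjugation by $N$ (your identity $[n_1n_2,x]=[n_1,x]^{n_2}[n_2,x]$ only shows twisted closure), so $P$ is merely a compact subset, and the telescoping $H_1\subseteq PH_2\subseteq PPH_3=PH_3\subseteq\cdots$ collapses: the inclusion $P\cdot P\subseteq P$ is essentially the theorem being proved. (Your central ``key step'' is fine even for sets, since modulo $[N,G,G]$ all the relevant commutators are central and bilinear; but iterating it only yields $[N,G]=[N,x][N,y][H_1,x][H_1,y]\cdots[H_c,x][H_c,y]$, a product of $2(c+1)$ commutator sets growing with the nilpotency class.) The missing idea is a merging step for commutators with the \emph{same} second entry: from $[nm,x]=[n,x]^m[m,x]$ one gets $[n,x][m,x]=\bigl[[n,x]^{-1},m\bigr]\,[nm,x]$, so a product of two $x$-commutators is a single $x$-commutator times an error term lying in $[[N,G],N]$, i.e.\ strictly deeper in the lower central series; an induction on the class, applied to the pair $([N,G],G)$, then absorbs these errors into the commutator sets themselves rather than into an ambient subgroup. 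That combinatorial induction is the actual content of \cite{W}, Ex.\ 1.2.3, and it is what your proof would need to add.
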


\begin{proof}
Follows from \cite{W}, Ex. 1.2.3.
\end{proof}

\medskip

The first-order expressibility of various properties of pro-$p$ groups is
established in \cite{NST}, Sections 2 and 5; these will sometimes be used
without special mention.

\section{Proof of Theorem \ref{mainthm}}

We start by quoting

\begin{theorem}
\label{meta}\emph{(\cite{M}, Th. 1) }The group $F/D_{2}$ is FA in profinite groups.
\end{theorem}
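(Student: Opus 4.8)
The plan is to produce a first-order sentence $\theta$ in the language of groups such that, for a profinite group $H$, one has $H\models\theta$ if and only if $H$ is topologically isomorphic to $G:=F/D_{2}$. The guiding algebraic principle is that $G$ is determined up to isomorphism by three features: it is metabelian; its abelianization $G/G'$ is free abelian pro-$p$ of rank $2$; and $G'=M_{1}$ is a free cyclic module over $R=\mathbb{Z}_{p}[[G/G']]$, generated by the image of $u=[y,x]$ (Proposition \ref{P1}(i)). Indeed any profinite group $H$ enjoying these three properties receives, by the universal property of $F$ together with metabelianness, a continuous surjection $\pi\colon G\to H$ sending $x,y$ to a generating pair; the task then is to show $\pi$ is injective.

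To build $\theta$, I would first impose the metabelian axiom $\forall a,b,c,d\;[[a,b],[c,d]]=1$. I would then invoke the first-order machinery of \cite{NST}: the recognition of $\mathbb{Z}_{p}$ among profinite groups, the definability of the derived subgroup (so that the congruence $a\equiv b\bmod H'$ is a first-order relation), and thence a formula asserting $H/H'\cong\mathbb{Z}_{p}^{2}$ with a distinguished basis. Since $H/H'$ and, once recovered, $H'$ are both pro-$p$, this already forces $H$ to be pro-$p$, so no separate axiom is required. The heart of the construction is a first-order interpretation, uniform across all models of $\theta$, of the pair consisting of the ring $R=\mathbb{Z}_{p}[[H/H']]$ and the $R$-module $H'$, the module action being conjugation in $H$ combined with the interpreted $\mathbb{Z}_{p}$-scalars. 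Once $(R,H')$ is interpreted, the assertions that $H'$ is cyclic generated by $[y,x]$ and that $[y,x]$ has trivial annihilator in $R$, i.e.\ that $H'$ is \emph{free} cyclic, are first-order and complete $\theta$.

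For the recovery step, given $H\models\theta$ the surjection $\pi$ restricts to a surjection $\pi'\colon M_{1}\to H'$ of $R$-modules, the two module structures matching under the induced isomorphism $G/G'\cong H/H'$. Identifying each with $R$ by freeness, $\pi'$ becomes a surjective endomorphism of $R\cong\mathbb{Z}_{p}[[T_{1},T_{2}]]$, which is Noetherian, hence $\pi'$ is injective and $\ker\pi\cap G'=0$. Since $\pi$ carries the basis of $G/G'$ to that of $H/H'$, it induces an isomorphism $G/G'\cong H/H'$, so $\ker\pi\le G'$; therefore $\ker\pi=0$ and $\pi$ is an isomorphism. I expect the main obstacle to be the middle step: producing a \emph{uniform} first-order interpretation of the Iwasawa algebra $R$ and its module $H'$ purely in the group language, valid in \emph{every} profinite model rather than only in $G$, and in particular capturing freeness (triviality of the annihilator) together with the rank-$2$ freeness of the abelianization. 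This is precisely where the interpretability results for $\mathbb{Z}_{p}$ and completed group algebras of \cite{NST} must carry the weight.
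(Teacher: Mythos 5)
This theorem is not proved in the paper at all: it is quoted from \cite{M}, Th.\ 1, and then used as a black box (Axioms A2, B3, C3). So your proposal must stand on its own, and it does not. Its algebraic half is correct: a profinite group $H$ which is metabelian, has $H/H'$ free abelian pro-$p$ of rank $2$ on the images of a generating pair $x,y$, and has $H'$ free cyclic as a $\mathbb{Z}_p[[H/H']]$-module on $[y,x]$, is indeed isomorphic to $F/D_2$, by essentially the Hopfian/Noetherian recovery you describe (granted that $H$ is pro-$p$, which you need before you can invoke the universal property of $F$). The gap is the model-theoretic half: producing a \emph{single first-order sentence in the group language} that expresses the free-module condition uniformly over all profinite groups is not a technical footnote to be delegated to \cite{NST} --- it is the entire content of the theorem. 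The toolkit of \cite{NST}, Sections 2 and 5 (the parts invoked in this paper) consists of expressibility of finite generation, definability of subgroups of finite verbal width such as $[G,x][G,y]$, relativization to definable quotients, and FA results for nilpotent-type groups; it contains nothing that interprets the completed group algebra $R=\mathbb{Z}_p[[X]]$ or its action on a module. Elements of $R$ are infinite power series and the action is a limit of group operations, so the clause ``every element of $H'$ equals $u\cdot r$ for a unique $r\in R$'' involves quantification over $R$, which is not available. The present paper is organized precisely to avoid ever writing such an axiom: A4, B4, B5, C2 are surrogates quantifying only over group elements (centralizer conditions, injectivity of $a\mapsto[a,x]$, intersection conditions like $[M,x]\cap[M,y]=[M,x,y]$), and Propositions \ref{P1} and \ref{P2} exist exactly to show such surrogates suffice. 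The Appendix makes the obstruction concrete: even to interpret $R$ \emph{with parameters} in the larger group $F/Q$ one needs the pairing $(v,w)\mapsto[v,w^g]$ taking values in the centre, and that pairing vanishes identically in the metabelian quotient $F/D_2$; so there is no evident interpretation of $R$ in $F/D_2$ itself, let alone one uniform across all models of a prospective sentence.

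A secondary point: your claim that no pro-$p$ axiom is needed because ``$H/H'$ and, once recovered, $H'$ are both pro-$p$'' is circular, since recovering the $R$-module structure of $H'$ presupposes the missing interpretation, while constructing the surjection $F\to H$ already requires knowing $H$ is pro-$p$. This particular defect is repairable without the interpretation: if $H$ is metabelian with $H/H'\cong\mathbb{Z}_p^2=:X$, then for $q\neq p$ the $q$-Sylow subgroup $M_q$ of the abelian group $H'$ satisfies $M_q=\overline{M_q(X-1)}$ (the commutator pairing from $\wedge^2 X$, a pro-$p$ group, into the pro-$q$ group $M_q/\overline{M_q(X-1)}$ is trivial by continuity), whence $M_q=0$ by topological Nakayama --- the same kind of manoeuvre this paper uses, via the sentence $\gamma$, to pass from pro-$p$ to profinite for $F/Q$. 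But the main gap stands: what you have written is a correct \emph{reduction} of Theorem \ref{meta} to the first-order expressibility of module freeness, and that expressibility, not the algebra, is what \cite{M} actually has to prove.
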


Set $\widetilde{F}=F/Q$ and write $\symbol{126}$ for the quotient map. It
follows from Prop. \ref{P1} (i), (ii) that the centre of $\widetilde{F}$ is
$\widetilde{D_{2}}$.

I claim that a pro-$p$ group $G$ is isomorphic to $\widetilde{F}$ iff it
satisfies the following axioms; that each of these can be expressed as a
first-order sentence follows from results in \cite{NST}, Section 5.1, together
with Lemma \ref{xy} and Theorem \ref{meta}.

\begin{enumerate}
\item[A1] $\mathrm{d}(G)=2.$

\item[A2] $G/\mathrm{Z}(G)\cong F/D_{2}.$

\item[A3] Set $Z=\mathrm{Z}(G)$. Then $Z^{2}\neq1.$

\item[A4] Fix a generating pair $\{x,y\}$, put $u=[y,x]$ and $C=\mathrm{C}%
_{G^{\prime}}(u)$. Then
\[
a\in C\smallsetminus Z\Longrightarrow\mathrm{C}_{G^{\prime}}(a)=C.
\]

\end{enumerate}

It follows from Propositions \ref{P1} and \ref{P2} that the axioms hold for
$\widetilde{F}$. Now suppose that the pro-$p$ group $G$ satisfies the axioms.
There is an epimorphism $\theta$ from $F$ to $G$ sending $x$ to $x$ and $y$ to
$y$ (abusing notation in the obvious way). Set $K=\ker\theta$. Axiom A2
implies that $K\leq D_{2}$; indeed, since $G/Z$ is metabelian, $D_{2}%
\theta\subseteq Z,$ while $G/D_{2}\theta\cong F/KD_{2}$, so $F/KD_{2}$ maps
onto $G/Z\cong F/D_{2}$. As $F/D_{2}$ is Hopfian this implies that
$KD_{2}=D_{2}$.

Also $K\geq\lbrack D_{2},F]=Q$ as $[D_{2},F]\theta\subseteq\lbrack Z,G]=1$,
and the centre of $F/K$ is exactly $D_{2}/K$ since $F/D_{2}$ has trivial
centre; so $D_{2}\theta=Z$.

Set $C(K)/K:=\mathrm{C}_{D/K}(uK)$. Thus $C(K)\geq C=\mathrm{C}_{D}%
(uQ\operatorname{mod}Q),$ and
\begin{equation}
K=[C(K),u]\cdot Q \label{[C(K),u]}%
\end{equation}
by (\ref{genD_2modQ}). Axiom A4 applied to $F/K\cong G$ gives%
\begin{equation}
a\in C(K)\smallsetminus D_{2}\Longrightarrow\mathrm{C}_{D/K}(aK)=C(K)/K.
\label{ax4applied}%
\end{equation}

Suppose that $K\neq Q$. Then $K^{2}\nsubseteq Q$ since $D_{2}/Q$ is
torsion-free by Proposition \ref{P1} (iv), whence $C(K)^{2}\nsubseteq C$. Let
$N$ be the subgroup given in Proposition \ref{P2}. It follows from (\ref{CXN})
that there exist $a\in N\smallsetminus D_{2}$ and $c\in C$ such that $ca\in
C(K)^{2}$, and then $a\in N\cap C(K)\smallsetminus D_{2}$.

Now $[a,N]\subseteq N^{\prime}\subseteq Q\subseteq K$ with (\ref{ax4applied})
gives $N\leq$ $C(K)$. As also $C\leq C(K)$ it follows that $C(K)\geq D^{2}. $
Hence
\[
K\geq\lbrack D^{2},u]Q\supseteq D_{2}^{2},
\]
by (\ref{[C(K),u]}) and (\ref{genD_2modQ}). Thus%
\[
Z^{2}=D_{2}^{2}\theta=1,
\]
contradicting Axiom A3. It follows that $K=Q,$ and $G\cong
F/K=F/Q=\widetilde{F}$ as claimed.

We have shown that $\widetilde{F}$ is FA among pro-$p$ groups. \ Let $\sigma$
be a sentence that characterizes this group among pro-$p$ groups. Let $\tau$
be a sentence asserting for a profinite group $G$ that $G/\mathrm{Z}(G)\cong
F/D_{2}$ (Theorem \ref{meta}). Let $\gamma$ be a sentence asserting that for
some elements $x,~y$ we have%
\[
\mathrm{Z}(G)\subseteq\lbrack G,x][G,y].
\]
I claim that $\tau\wedge\sigma\wedge\gamma$ determines $\widetilde{F}$ among
all profinite groups.

Indeed, suppose $G$ is a profinite group such that $G/\mathrm{Z}(G)\cong
F/D_{2}$. Then all Sylow pro-$q$ subgroups for $q\neq p$ are contained in
$\mathrm{Z}(G),$ so $G=Q\times P$ for a pro-$p$ group $G$ and an abelian
pro-$p^{\prime}$ group $Q$. Then $Q\subseteq\mathrm{Z}(G)\subseteq G^{\prime
}\subseteq P,$ so $Q=1$ and $G$ is a pro-$p$ group. Now $\sigma$ implies that
$G\cong\widetilde{F}$.

The first claim of Theorem \ref{mainthm} is now proved.

\medskip

Next, set $\widetilde{F}=F/D_{3}$, and write $x,$ $y$ for their images in
$\widetilde{F}$. Recall that $M_{2}=D_{2}/D_{3}$ which is the centre of
$D/D_{3}=$ $\widetilde{F}^{\prime}=[\widetilde{F},x][\widetilde{F},y]$.
Writing $M$ for $M_{2}$ and using additive notation we have%
\[
\lbrack M,\widetilde{F}]=[M,x]+[M,y]
\]
and
\[
\lbrack M,x]\cap\lbrack M,y]=[M,x,y]
\]
by Proposition \ref{P1} (iii). Also each map $M\rightarrow M;~a\mapsto\lbrack
a,g]$ for $1\neq g\in X,$ is injective, by Proposition \ref{P1} (ii).

Let $K$ be a closed normal subgroup of $F$ with $[D_{2},F]\geq K\geq D_{3}$.
Thus%
\[
\widetilde{K}:=K/D_{3}\leq\lbrack M,\widetilde{F}]=[M,x]+[M,y].
\]
Write $\pi:\widetilde{F}\rightarrow\widetilde{F}/\widetilde{K}$ for the
quotient map, and assume

\begin{enumerate}
\item[P1] Each of the maps $M\pi\rightarrow M\pi;~a\mapsto\lbrack a,x],$
$a\mapsto\lbrack a,y]$, is injective,

\item[P2]
\[
\lbrack M,x]\pi\cap\lbrack M,y]\pi=[M,x,y]\pi.
\]

\end{enumerate}

Now suppose $w=[a,x]+[b,y]\in\widetilde{K}$ with $a,~b\in M$. Then
$[a\pi,x]=[c\pi,x,y]=[c\pi,y,x]$ for some $c\in M,$ whence $a\pi=[c\pi,y].$
Then
\[
\lbrack b\pi,y]=-[a\pi,x]=[-c\pi,y,x]=[-c\pi,x,y],
\]
whence $b\pi=[-c\pi,x]$. Thus there exist $\alpha,~\beta\in\widetilde{K}$ such
that ~$a=[c,y]+\alpha,~b=-[c,x]+\beta$, and we obtain%
\begin{align*}
w  &  =[c,y,x]+[\alpha,x]-[c,x,y]+[\beta,y]\\
&  =[\alpha,x]+[\beta,y]\in\lbrack\widetilde{K},\widetilde{F}].
\end{align*}
Thus $\widetilde{K}=[\widetilde{K},\widetilde{F}],$ and as $\widetilde{F}$ is
a pro-$p$ group it follows that $\widetilde{K}=1$.

\medskip

It is now easy to establish the \emph{claim}: a pro-$p$ group $G$ is
isomorphic to $\widetilde{F}$ iff it satisfies the following axioms:

\begin{enumerate}
\item[B1] $\mathrm{d}(G)=2.$

Pick generators $x,~y$ for $G,$ set $D^{\ast}:=[G,x][G,y]$ and $M=\mathrm{Z}%
(D^{\ast}).$

\item[B2] $[D^{\ast},D^{\ast}]\subseteq M.$

\item[B3] $G/[M,G]\cong F/[D_{2},F].$

\item[B4] Each of the maps $M\rightarrow M;~a\mapsto\lbrack a,x],$
$a\mapsto\lbrack a,y],$ is injective.

\item[B5] $[M,x]\cap\lbrack M,y]=[M,x,y]$.
\end{enumerate}

Since $G$ is a pro-$p$ group, $D^{\ast}$ is equal to $G^{\prime}$. That B3 is
expressible by a first-order sentence is what we have proved above.

Suppose that $G$ is a such a group. As above, there is an epimorphism $\theta$
from $F$ to $G$ sending $x$ to $x$ and $y$ to $y$ (abusing notation in the
obvious way). Then $D\theta=D^{\ast}$ and $D_{3}\theta=\overline{[D^{\ast
},D^{\ast},D^{\ast}]}\subseteq\overline{[M,D^{\ast}]}=\{1\}$, so
$K:=\ker\theta\geq D_{3}$.

\bigskip Axioms B3 and B2 provide epimorphisms%
\[
\frac{F}{[D_{2},F]}\overset{\theta^{\ast}}{\rightarrow}\frac{G}{[\overline
{D^{\ast\prime}},G]}\rightarrow\frac{G}{[M,G]}\overset{\cong}{\rightarrow
}\frac{F}{[D_{2},F]}.
\]
The Hopf property of $F/[D_{2},F]$ implies that these are all isomorphisms; it
follows that $K\leq\lbrack D_{2},F]$.

Write $\pi:\widetilde{F}\rightarrow\widetilde{F}/\widetilde{K}$ for the
quotient map, as above. Then Axioms B4 and B5 are equivalent to the conditions
P1 and P2. We have shown that these imply $\widetilde{K}=1$. Thus $\theta$
induces an isomorphism from $\widetilde{F}$ to $G$, and the claim is established.

To complete the proof of Theorem \ref{mainthm} it remains to show that
$\widetilde{F}$ is FA among all profinite groups. This depends on\bigskip

\begin{lemma}
\label{prof} \emph{(\cite{S}, Lemma 4.6) }Let $G$ be profinite group and $M$ a
profinite $G$-module. Put $C=\mathrm{C}_{G}(M)$. Suppose that%
\begin{align*}
pM+M(G-1)  &  \neq M,\\%
{\displaystyle\bigcap\limits_{g\in G\smallsetminus C}}
M(g-1)  &  =0,\\
0\neq a\in M  &  \rightarrow\mathrm{C}_{G}(a)=C.
\end{align*}
Then $M$ is a pro-$p$ group.
\end{lemma}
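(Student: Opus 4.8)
The plan is to reformulate everything in terms of the action of $\Gamma=G/C$ on $M$. The third hypothesis says precisely that $\Gamma$ acts \emph{fixed-point-freely}: for $1\neq a\in M$ we have $\mathrm{C}_G(a)=C$, so no nontrivial element of $\Gamma$ fixes a nonzero element of $M$, and equivalently $(g-1)$ is injective on $M$ for every $g\in G\smallsetminus C$. Writing $M=\prod_{\ell}M_{\ell}$ for the decomposition into primary (pro-$\ell$) components, each $M_{\ell}$ is a closed $G$-submodule, and both $M(g-1)=\prod_{\ell}M_{\ell}(g-1)$ and the intersection in the second hypothesis decompose accordingly; thus it suffices to prove $M_q=0$ for every prime $q\neq p$. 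The degenerate case $C=G$ is immediate: then $G\smallsetminus C=\varnothing$, the empty intersection in the second hypothesis is all of $M$, and we get $M=0$.

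First I would show, using the first hypothesis, that $\Gamma$ is a pro-$p$ group. Suppose not; then for some prime $\ell\neq p$ there is $1\neq\gamma\in\Gamma$ whose closure $\overline{\langle\gamma\rangle}$ is pro-$\ell$, say with representative $g\in G\smallsetminus C$. This $\gamma$ acts coprimely on the pro-$p$ module $M_p$, and since a pro-$\ell$ group acting on a pro-$p$ module ($\ell\neq p$) has trivial cohomology in positive degrees, there is a Fitting decomposition $M_p=\mathrm{C}_{M_p}(\gamma)\oplus[M_p,\gamma]$. Fixed-point-freeness gives $\mathrm{C}_{M_p}(\gamma)=0$, so $M_p=[M_p,\gamma]=M_p(g-1)\subseteq M(G-1)$. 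Then $M/M(G-1)$ is a quotient of the pro-$p'$ group $M/M_p$, so multiplication by $p$ is onto it; that is, $pM+M(G-1)=M$, contradicting the first hypothesis. Hence every element of $\Gamma$ is a $p$-element and $\Gamma$ is pro-$p$.

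It then remains to kill the prime-to-$p$ components. Fix $q\neq p$ and any $1\neq\gamma\in\Gamma$ (such $\gamma$ exists once $C\neq G$), with representative $g\in G\smallsetminus C$. Now $\overline{\langle\gamma\rangle}$ is pro-$p$ and acts coprimely on the pro-$q$ module $M_q$, so again $M_q=\mathrm{C}_{M_q}(\gamma)\oplus[M_q,\gamma]$; fixed-point-freeness forces $\mathrm{C}_{M_q}(\gamma)=0$, whence $(g-1)$ is \emph{surjective} on $M_q$ and $M_q(g-1)=M_q$. As this holds for every $g\in G\smallsetminus C$, the $M_q$-component of the intersection in the second hypothesis is $\bigcap_{g\notin C}M_q(g-1)=M_q$, and the second hypothesis forces $M_q=0$. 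Therefore $M=M_p$ is a pro-$p$ group.

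The main obstacle is the second paragraph: the role of the first hypothesis is not obvious, and without it the result is false, since multiplication by $1+q$ makes $\mathbb{Z}_q$ a fixed-point-free $\mathbb{Z}_q$-module satisfying the other two hypotheses. The point to get right is that the first hypothesis serves solely to force $\Gamma$ to be pro-$p$, after which coprime action finishes the argument; the supporting technical input is the profinite Fitting decomposition, which I would justify by the vanishing of $H^{1}$ of a pro-$\ell$ group on a pro-$r$ module for $\ell\neq r$, passing to the inverse limit over finite quotients.
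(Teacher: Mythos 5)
Your proof is correct, but note the point of comparison here: the paper itself gives no proof of this lemma at all — it is quoted verbatim from \cite{S}, Lemma 4.6, and used as a black box. So the only thing to assess is your argument on its own terms, and it stands up. The structure is sound: (a) the primary decomposition $M=\prod_{\ell}M_{\ell}$ of a profinite abelian group into its (characteristic, hence $G$-invariant) Sylow components, with $M(g-1)=\prod_{\ell}M_{\ell}(g-1)$; (b) the coprime Fitting decomposition $V=\mathrm{C}_{V}(\gamma)\oplus V(\gamma-1)$ for a procyclic group acting on a profinite abelian group of coprime order, which is indeed standard (either via your $H^{1}$-vanishing argument with fixed-point lifting, or more directly via the compatible averaging idempotents on the finite quotients); and (c) the division of labour between the hypotheses — the first forces $G/C$ to be pro-$p$, after which coprimeness makes $g-1$ surjective on each $M_{q}$, $q\neq p$, so the second hypothesis kills those components. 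Two details you handled that genuinely need handling: the case $C=G$ (where the empty intersection must be read as $M$, forcing $M=0$; without that convention the lemma would be false, e.g.\ for a trivial action on $\mathbb{Z}_{p}\times\mathbb{Z}/q$), and the identification $[M_{\ell},\gamma]=M_{\ell}(g-1)$ as an already-closed subgroup. In step 1, your passage to the quotient $M/M(G-1)$ can be shortcut — once $M_{p}\subseteq M(G-1)$ you have $pM+M(G-1)\supseteq pM_{p'}+M_{p}=M_{p'}+M_{p}=M$ directly — but this is cosmetic. Your closing counterexample (multiplication by $1+q$ on $\mathbb{Z}_{q}$) correctly certifies that the first hypothesis cannot be dropped, which is a good check that no hypothesis is idle in your argument.
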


Note that these hypotheses are satisfied with $M=M_{2}$ and $G=\widetilde{F};
$ the first one is trivial since $\widetilde{F}$ is a pro-$p$ group, the
others follow from Proposition \ref{P1} (iv), (ii).

Let $\sigma$ be a sentence that characterizes $\widetilde{F}$ among pro-$p$
groups. I claim that a profinite group $G$ is isomorphic to $\widetilde{F}$
iff it satisfies the following axioms in addition to $\sigma$:

\begin{enumerate}
\item[C1] There exist $x,~y$ such that the set $[G,x][G,y]$ contains every
product of $3$ commutators.

Pick such a pair $x,~y$ and set
\begin{align*}
D^{\ast}  &  :=[G,x][G,y],\\
M  &  =\mathrm{Z}(D^{\ast}).
\end{align*}
(It follows that $D^{\ast}=G^{\prime}$ and is closed).

\item[C2] The hypotheses of Lemma \ref{prof}.

\item[C3] $G/M\cong F/D_{2}$.
\end{enumerate}

As before, Axiom C3 is legitimate in view of Theorem \ref{meta}. If $G$
satisfies these axioms then $M$ is a pro-$p$ group by Lemma \ref{prof}, and
$G/M$ is is a pro-$p$ group in view of C3, so $G$ is a pro-$p$ group, and then
$\sigma$ implies that $G\cong\widetilde{F}$.

This completes the proof of Theorem \ref{mainthm}.

\section{The setup}

The rest of the paper is mostly devoted to Propositions \ref{P1} and \ref{P2}.
The first of these actually holds in greater generality: $M_{2}$ can be
replaced by $M_{n}=\overline{\gamma_{n}(D)}/\overline{\gamma_{n+1}(D)}$ for
any $n\geq2$ (and the proof is in principle the same, just notationally more
complicated). The difficulty in generalizing Theorem \ref{mainthm} lies in the
lack, so far, of a substitute for Proposition \ref{P2}.

The short final section is by way of an aside: we show that the ring $R$ can
be interpreted in $F$ via Proposition \ref{P1}(i).

\subsection{Structure of $M$\label{Mnstruct}}

Keep the notation as before. $X$ is the free abelian pro-$p$ group on
$\{x,y\}$. According to \cite{DDMS} Thm. 7.20 the completed group algebra
$R:=\mathbb{Z}_{p}[[X]]$ is equal to the power series algebra $\mathbb{Z}%
_{p}[[\xi,\eta]]$ where $\xi=x-1,$ $\eta=y-1$.

\bigskip To simplify notation, let us assume in this section that $D_{3}=1,$
and write%
\[
\symbol{126}:F\rightarrow F/D_{2}%
\]
for the quotient map.

Proposition \ref{P1}(i) claims that $\widetilde{D}$ is the free cyclic
$R$-module on $\widetilde{u}.$ This is the special case $d=2$ of \cite{M},
Prop. 8. It allows us to identify $\widetilde{D}$ with $R$ via $\widetilde{u}%
r\mapsto r$.

Set $M=M_{2}=D_{2}$. Then $M$ is an $R$-module, via%
\[
a\cdot g=a^{g}~~(g\in X).
\]
For $w_{1},~w_{2}\in D$ we set%
\[
(\widetilde{w_{1}},\widetilde{w_{2}}):=[w_{1},w_{2}]
\]
This gives a bilinear map from $\widetilde{D}$ into $M,$ whose image generates
$M$ as an abelian pro-$p$ group.

Set $A=\widetilde{D}.$ According to \cite{RZ}, Ex. 5.5.5,%
\begin{align*}
T:=A\widehat{\otimes\!}A\cong &  \mathbb{Z}_{p}[[X_{1}\times X_{2}]]:=S\\
g\otimes h\longmapsto &  g(1)h(2)~~(g,~h\in X)
\end{align*}
where $g\longmapsto g(i)$ is an isomorphism from $X$ to $X_{i}=\overline
{\left\langle x(i),y(i)\right\rangle }$. The induced isomorphism from $R$ to
$R_{i}:=\mathbb{Z}_{p}[[X_{i}]]\subseteq S$ is written $r\longmapsto r(i)$.
For simplicity I will write $\xi_{i}=\xi(i),~~\eta_{i}=\eta(i)$.

Thus $T$ is identified with $S$ via%
\begin{equation}
r\otimes s\mapsto r(1)s(2). \label{tensor-product}%
\end{equation}

Recall ( \cite{DDMS} Thm. 7.20) that%
\[
S=\mathbb{Z}_{p}[[\xi_{1},\eta_{1},\xi_{2},\eta_{2}]].
\]
Let $X$ act componentwise on the tensors; then $g\in X$ acts on $S$ as
multiplication by
\[
\widehat{g}:=g(1)g(2).
\]

Now $g\mapsto\widehat{g}$ induces an embedding $r\mapsto\widehat{r}$ of $R$
into $S$, with $\widehat{R}=\mathbb{Z}_{p}[[\widehat{X}]]=\mathbb{Z}%
_{p}[[\zeta,\tau]]$ where
\[
\zeta=\widehat{\xi},~~\tau=\widehat{\eta}.
\]

Replacing $x(2)$ by $z=\widehat{x}$ and $y(2)$ by $t=\widehat{y}$ we see that
$X_{1}\times X_{2}$ has a basis
\[
\{x(1),y(1),z,t\}.
\]
So we also have%
\[
S=\mathbb{Z}_{p}[[\xi_{1},\eta_{1},\zeta,\tau]].
\]

\begin{lemma}
\label{Sasmodule}\emph{(i)} $S$ is torsion-free as an $R$-module.

\emph{(ii)}%
\begin{equation}
S\zeta\cap S\tau=S\zeta\tau. \label{intersection}%
\end{equation}

\emph{(iii)}
\begin{equation}%
{\displaystyle\bigcap\limits_{1\neq g\in X}}
S(\widehat{g}-1)=0. \label{inters}%
\end{equation}

\end{lemma}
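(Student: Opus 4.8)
The plan is to exploit the clean polynomial structure $S=\mathbb{Z}_p[[\xi_1,\eta_1,\zeta,\tau]]$ exhibited just above the statement. Since $S$ is a power series ring over $\mathbb{Z}_p$, it is an integral domain, and $R=\widehat{R}=\mathbb{Z}_p[[\zeta,\tau]]$ sits inside it as a subring (the coefficient ring in the variables $\zeta,\tau$). I would treat all three parts by reducing questions about $R$-module structure of $S$ to elementary divisibility facts in the commutative domain $S$.

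For part (i), torsion-freeness of $S$ as an $R$-module is immediate: if $0\neq r\in R$ and $s\in S$ satisfy $rs=0$, then since $R\hookrightarrow S$ and $S$ is a domain, we get $s=0$. For part (iii), I would observe that each element $\widehat{g}-1$ with $1\neq g\in X$ is a nonzero element of the maximal ideal of $R$ (indeed $\widehat{g}=g(1)g(2)$ acts nontrivially when $g\neq1$), so $S(\widehat{g}-1)$ is a nonzero ideal. The cleanest route is to pick two multiplicatively independent elements among the $\widehat{g}$—for instance $\zeta=\widehat{\xi}$ and $\tau=\widehat{\eta}$ correspond to $g=x$ and $g=y$—and show $S\zeta\cap S\tau$ already forces high order of vanishing; intersecting over all $g$ then drives the order to infinity, so the only common element is $0$. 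Concretely, I expect to argue that an element lying in every $S(\widehat{g}-1)$ must be divisible by infinitely many pairwise non-associate irreducibles (or must vanish to arbitrarily high order in the grading of $S$ by total degree), whence it is $0$ by the completeness/separatedness of the power series topology.

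Part (ii) is the technical heart and the step I expect to cause the most trouble. I want $S\zeta\cap S\tau=S\zeta\tau$; the inclusion $\supseteq$ is trivial, so the work is in $\subseteq$. Here I would use that $\zeta$ and $\tau$ form part of a regular system of parameters for the regular local ring $S$: in such a ring any two members of a regular system of parameters generate a prime (hence radical) intersection behaving like a polynomial ring, and in particular $\zeta,\tau$ form a regular sequence, so that $\zeta\mid s\tau$ forces $\zeta\mid s$. Spelling this out, if $w=s_1\zeta=s_2\tau$, then $\tau$ divides $s_1\zeta$; since $\zeta,\tau$ are a regular sequence (equivalently, $S/\zeta S=\mathbb{Z}_p[[\xi_1,\eta_1,\tau]]$ is a domain in which $\tau$ is a nonzerodivisor), $\tau$ divides $s_1$, say $s_1=s_3\tau$, giving $w=s_3\zeta\tau\in S\zeta\tau$. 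The only subtlety is justifying the regular-sequence property intrinsically rather than by invoking heavy commutative algebra; but because $S$ is explicitly a four-variable formal power series ring and $\zeta,\tau$ are genuine independent variables (after the change of basis to $\{x(1),y(1),z,t\}$), this reduces to the elementary fact that in $\mathbb{Z}_p[[\xi_1,\eta_1,\zeta,\tau]]$ two of the variables form a regular sequence, which one can verify directly by comparing power series coefficients.

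Finally I would note that Lemma \ref{Sasmodule} is precisely the $S$-level model for Proposition \ref{P1}(ii)--(iv): the passage back from $S$ to the actual module $M_2$ will be handled by the bilinear identification $(\widetilde{w_1},\widetilde{w_2})\mapsto[w_1,w_2]$ and the isomorphism $T\cong S$, so proving the statement for $S$ is the essential content. The main obstacle, to reiterate, is part (ii): everything else follows from $S$ being a domain, whereas the intersection identity genuinely requires the regular-sequence behaviour of the two distinguished variables $\zeta,\tau$.
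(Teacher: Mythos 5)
Your parts (i) and (ii) are correct and agree with the paper: the paper dismisses (i) with exactly your observation that $R$ acts through $\widehat{R}\subseteq S$ and $S$ is a domain, and it calls (ii) ``clear'', for which your regular-sequence argument (reduce modulo one of the variables, note the quotient $\mathbb{Z}_p[[\xi_1,\eta_1,\zeta]]$ is a domain in which the other variable is nonzero) is precisely the right justification — modulo the harmless slip that to deduce $\tau\mid s_1$ from $\tau\mid s_1\zeta$ you should reduce mod $\tau$, not mod $\zeta$.

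The gap is in (iii), and it sits exactly where the paper's one real idea lies. Your plan is: a nonzero element of $W=\bigcap_{1\neq g\in X}S(\widehat{g}-1)$ would be divisible by infinitely many pairwise non-associate irreducibles, hence must vanish. The finiteness principle you invoke is fine (in the UFD $S$ a nonzero element has only finitely many prime divisors up to associates; the paper uses the equivalent fact that a nonzero ideal of a Noetherian domain has only finitely many minimal primes). What you never establish is that the family $\{\widehat{g}-1 : 1\neq g\in X\}$ actually contains infinitely many pairwise non-associate irreducibles — and this is not automatic: for $g=x^p$, for example, $\widehat{g}-1=(1+\zeta)^p-1=\zeta\bigl(p+\tbinom{p}{2}\zeta+\dots+\zeta^{p-1}\bigr)$ is a product of two non-units, so $\widehat{g}-1$ is certainly not irreducible for arbitrary $g$. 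The paper supplies exactly the missing step by a change of basis: if $g$ is a \emph{basis element} of $X$, say $X=\overline{\left\langle g,g^{\prime}\right\rangle }$, then $\{x(1),y(1),\widehat{g},\widehat{g^{\prime}}\}$ is a basis of $X_1\times X_2$, so
\[
S=\mathbb{Z}_p[[\xi_1,\eta_1,\widehat{g}-1,\widehat{g^{\prime}}-1]],
\]
whence $S(\widehat{g}-1)$ is visibly prime (the quotient is again a power series ring, hence a domain); and the ideals $S(\widehat{g}-1)$, $g=xy^{j}$, $j\in\mathbb{N}$, are pairwise distinct, so a nonzero $W$ would have infinitely many minimal primes, a contradiction. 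Without some such argument proving primality and non-associateness for an infinite subfamily (restricting to that subfamily is harmless, since $W$ is contained in the corresponding intersection), your factorization/order-of-vanishing argument does not close; with it, your proof and the paper's become essentially the same.
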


\begin{proof}
Claim (ii) is clear, and (i) is clear since $R$ acts as multiplication by
$\widehat{R}\subseteq S$ and $S$ is a domain. For (iii), let $W$ denote the
intersection in (\ref{inters}). Note now that if $g$ is a basis element of the
free abelian pro-$p$ group $X,$ say $X=\overline{\left\langle g,g^{\prime
}\right\rangle }$, then $S=\mathbb{Z}_{p}[[\xi_{1},\eta_{1},\gamma
,\gamma^{\prime}]]$ where $\gamma=(\widehat{g}-1),$ $\gamma^{\prime
}=(\widehat{g^{\prime}}-1),$ so $S\gamma$ is a prime ideal. If $W\neq0$ then
$S\gamma$ is a minimal prime of $W$. But the number of such minimal primes is
finite, while the number of distinct ideals $S\gamma$ is infinite (let $g$
range over the elements $xy^{j},$ $j\in\mathbb{N},$ for example).
\end{proof}

\begin{proposition}
\label{KZprop}There is an injective morphism $\breve{\Theta}$ of abelian
pro-$p$ groups $M\rightarrow T$ such that%
\[
(a,b)\breve{\Theta}=a\otimes b-b\otimes a
\]
for all $a,~b\in A$.
\end{proposition}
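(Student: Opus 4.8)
The plan is to construct $\breve{\Theta}$ from the bilinear map $(\,\cdot\,,\cdot\,):A\times A\to M$ and the universal property of the completed tensor product, then show injectivity by a dimension/relations argument. First I would observe that the commutator map $(w_1,w_2)\mapsto[w_1,w_2]$ is bilinear as a map $\widetilde{D}\times\widetilde{D}\to M$ (this is recorded just before the statement: it is biadditive because $M$ is central in $D$, and $R$-bilinear because conjugation is the $R$-action). Moreover it is alternating modulo the relevant relations: $[w,w]=1$ and $[w_1,w_2][w_2,w_1]=1$ give $(a,a)=0$ and $(a,b)=-(b,a)$. By the universal property of $\widehat{\otimes}$ (as used in \cite{RZ}, Ex.~5.5.5, which furnishes $T=A\widehat{\otimes}A\cong S$), the bilinear map $(\,\cdot\,,\cdot\,)$ factors through a continuous homomorphism $\Theta:T\to M$ with $(a\otimes b)\Theta=(a,b)$. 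Dually — or rather, going the other way — I want a map $M\to T$, so the cleaner route is to produce $\breve\Theta:M\to T$ directly and verify it inverts the natural surjection.

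The key construction is as follows. Because $M$ is generated as an abelian pro-$p$ group by the image of $(\,\cdot\,,\cdot\,)$, and because of the antisymmetry relations, $M$ is a continuous homomorphic image of the second exterior power $A\widehat{\wedge}A$, i.e. of the antisymmetric part of $T=A\widehat{\otimes}A$. Concretely, the element $a\otimes b-b\otimes a\in T$ is the natural candidate for the image of $(a,b)$. So I would define $\breve\Theta$ on generators by $(a,b)\breve\Theta=a\otimes b-b\otimes a$ and check it is well defined: every relation among the generators $(a,b)$ of $M$ must map to $0$ in $T$. The defining relations of $M=D_2$ (with $D_3=1$) as the free object of its variety are exactly the bilinearity and antisymmetry relations together with the Jacobi/Hall–Witt relations in degree $2$; since $D/D_2$ is free abelian these reduce, and one checks that $a\otimes b-b\otimes a$ respects all of them. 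Well-definedness and injectivity together amount to the assertion that $M\cong A\widehat\wedge A$ via this map.

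The main obstacle I expect is establishing injectivity, equivalently ruling out unexpected relations in $M$. The clean way is to exhibit the composite $M\overset{\breve\Theta}{\to}T\overset{\Theta}{\to}M$ as the identity: since $(a\otimes b-b\otimes a)\Theta=(a,b)-(b,a)=2(a,b)$ would introduce a factor of $2$, I must instead normalise so that $\Theta\circ\breve\Theta$ (or $\breve\Theta\circ\Theta$ restricted to the antisymmetric part) is the identity on generators, using that $\breve\Theta$ lands in the antisymmetric tensors where $a\otimes b-b\otimes a$ already records $(a,b)$ faithfully. The cleanest formulation is that $M$ is precisely the free $\mathfrak{N}_2$-object $D_2/D_3$, whose structure theorem (this is the $d=2$ case underlying \cite{M}, Prop.~8, and the general principle that the free nilpotent-of-class-$2$ Lie algebra on a free module is that module's exterior square) identifies it with $A\widehat\wedge A$. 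Having that, $\breve\Theta$ is the inclusion of $A\widehat\wedge A$ as the antisymmetric part of $T=A\widehat\otimes A$, and injectivity is automatic. I would therefore spend most of the proof pinning down this structure theorem for $M$ in the pro-$p$, completed setting, taking care that the completed tensor product $\widehat\otimes$ and the identification $T\cong S$ behave correctly with respect to the antisymmetrisation, and that no $p$-torsion obstructs splitting off the symmetric part.
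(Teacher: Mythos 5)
Your proposal is, in substance, the paper's own route. The paper's entire proof is a one-line citation: the proposition ``follows from the embedding of the free Lie algebra in the corresponding free associative algebra, as explained in the proof of \cite{KZ}, Cor.\ 2.3,'' specialized to degree $n=2$. That cited theorem packages exactly the two steps you separate out: the identification of $M=\overline{\gamma_{2}(D)}/\overline{\gamma_{3}(D)}$ with the degree-$2$ homogeneous component of the free Lie algebra on $A$ (your structure theorem $M\cong A\widehat{\wedge}A$), and the injection of that component into the degree-$2$ component $A\widehat{\otimes}A$ of the free associative (tensor) algebra via $[a,b]\mapsto a\otimes b-b\otimes a$. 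So you have located the right mathematical content, and your passage through the exterior square is just the degree-$2$ unpacking of what the paper cites wholesale.

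Two places where your write-up is softer than it should be. First, the claim that once $M\cong A\widehat{\wedge}A$ is known ``injectivity is automatic'': the antisymmetrization map $A\widehat{\wedge}A\rightarrow A\widehat{\otimes}A$ is genuinely automatic only for odd $p$ (multiplication by $2$ is then an automorphism of any abelian pro-$p$ group, and the composite with the natural projection back to $A\widehat{\wedge}A$ is multiplication by $2$). For $p=2$ you cannot split off the symmetric part, and you need $A\widehat{\wedge}A$ to be $2$-torsion-free; this must be established directly from the structure of $A\cong R$ as a free pro-$p$ module, \emph{not} borrowed from Proposition \ref{P1}(ii) --- in the paper that torsion-freeness is \emph{deduced from} this proposition via Lemma \ref{Sasmodule}(i), so invoking it here would be circular. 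Second, the structure theorem itself, which you rightly recognize as the crux and say you would spend most of the proof on, is the genuine content, and your pointer to \cite{M}, Prop.\ 8 is off target: that result is Proposition \ref{P1}(i), about $M_{1}=D/D_{2}$ being a free cyclic $R$-module, and says nothing about $M_{2}$. The input you actually need is the pro-$p$, completed version of the Witt/Magnus theory (graded pieces of the lower central series of a free pro-$p$ group form the free Lie $\mathbb{Z}_{p}$-algebra on the abelianization), which is precisely what \cite{KZ} supplies; without it, your well-definedness check (``the defining relations are exactly bilinearity and antisymmetry'' --- note also that Jacobi relations do not occur in degree $2$) is an assertion, not a proof.
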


\begin{proof}
This follows from the embedding of the free Lie algebra in the corresponding
free associative algebra, as explained in the proof of \cite{KZ}, Cor. 2.3;
here we specialize to the case $n=2$.\emph{ }
\end{proof}

\medskip

Now set $\gamma=(12)\in\mathrm{Sym}_{2}$, and let $\mathrm{Sym}_{2}$ act on
$S$ by permuting $\{g(1),g(2)\}$ for all $g\in X$, so $(r(1)s(2))\gamma
=r(2)s(1)$ for $r,~s\in R$. Using our identifications of $A$ with $R$ and of
$T$ with $S$, we see that $\breve{\Theta}$ becomes an embedding%
\[
\Theta:M\rightarrow S,
\]
where now for $r,s\in R,$%
\begin{align}
(\widetilde{u}r,\widetilde{u}s)\Theta &  =r(1)s(2)-s(1)r(2)\nonumber\\
&  =\left(  r(1)s(2)\right)  (1-\gamma)\text{.} \label{sigmasum}%
\end{align}
It follows that
\begin{equation}
M\Theta=S(1-\gamma). \label{MthetaSzeta}%
\end{equation}

Since $\widehat{g}^{\gamma}=\widehat{g}$ for $g\in X$ , we have
\begin{align*}
(\widetilde{u}r,\widetilde{u}s)g\Theta &  =\left(  r(1)g(1)s(2)g(2)\right)
(1-\gamma)\\
&  =\left(  r(1)s(2)\widehat{g}\right)  (1-\gamma)\\
&  =\left(  r(1)s(2)\right)  (1-\gamma)\widehat{g}\\
&  =(\widetilde{u}r,\widetilde{u}s)\Theta\widehat{g}.
\end{align*}

It follows that $bg\Theta=b\Theta\widehat{g}$ for $b\in M$ and $g\in X$.

Set%
\[
\Delta=S\zeta+S\tau.
\]

The following two identities will be established in Subsection \ref{finalsec}:%
\begin{align}
S(1-\gamma)\cap S\zeta &  =(S(1-\gamma))\zeta=(S\zeta)(1-\gamma),
\label{Szeta}\\
S(1-\gamma)\cap\Delta &  =\Delta(1-\gamma). \label{MDelta}%
\end{align}

With (\ref{MthetaSzeta}) these amount to

\begin{lemma}
\label{keylem}%
\begin{align*}
M\cap S\zeta &  =M(x-1),\\
M\cap\Delta &  =M(X-1).
\end{align*}

\end{lemma}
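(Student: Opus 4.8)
The plan is to deduce Lemma~\ref{keylem} from the two ring-theoretic identities~(\ref{Szeta}) and~(\ref{MDelta}) by transporting everything through the embedding $\Theta:M\rightarrow S$ of~(\ref{MthetaSzeta}). Recall that $\Theta$ identifies $M$ with the submodule $S(1-\gamma)\subseteq S$, and that the $R$-action is intertwined with multiplication by $\widehat{R}$, since we showed $bg\Theta=b\Theta\widehat{g}$ for $b\in M$ and $g\in X$. In particular the operator $a\mapsto a(x-1)=[a,x]$ on $M$ corresponds, under $\Theta$, to multiplication by $\widehat{x}-1=\zeta$ on $S(1-\gamma)$, and likewise $a\mapsto a(y-1)$ corresponds to multiplication by $\tau$. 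Thus $M(x-1)\Theta=(S(1-\gamma))\zeta$ and $M(X-1)\Theta=(S(1-\gamma))(\zeta,\tau)=\Delta(1-\gamma)$ (the last equality using that multiplication by $\zeta$ and by $\tau$ commutes with the idempotent-like action of $(1-\gamma)$).

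First I would establish the first identity. Applying $\Theta$ to the intersection $M\cap S\zeta$ is slightly delicate, since $M\cap S\zeta$ is not literally an intersection inside $M$ but means the preimage under $\Theta$ of $M\Theta\cap S\zeta=S(1-\gamma)\cap S\zeta$. By~(\ref{Szeta}) this intersection equals $(S(1-\gamma))\zeta$, which is exactly $M(x-1)\Theta$ by the correspondence just noted. Since $\Theta$ is injective, pulling back gives $M\cap S\zeta=M(x-1)$, which is the first assertion of the lemma once one reads $M\cap S\zeta$ as shorthand for the $\Theta$-preimage of $S(1-\gamma)\cap S\zeta$. The second identity is entirely parallel: $M\Theta\cap\Delta=S(1-\gamma)\cap\Delta=\Delta(1-\gamma)$ by~(\ref{MDelta}), and $\Delta(1-\gamma)=M(X-1)\Theta$, so injectivity of $\Theta$ yields $M\cap\Delta=M(X-1)$.

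The main point to get right is the bookkeeping of what the symbols $S\zeta$ and $\Delta$ mean as subsets against which we intersect $M$. The cleanest formulation is to say that $\Theta$ carries $M$ onto $S(1-\gamma)$ and carries the submodules $M(x-1)$ and $M(X-1)$ onto $(S(1-\gamma))\zeta$ and $\Delta(1-\gamma)$ respectively; then the two displayed equalities of the lemma are literally the images under $\Theta^{-1}$ of~(\ref{Szeta}) and~(\ref{MDelta}), after intersecting with $S\zeta$ and with $\Delta$ inside $S$. So the lemma is essentially a restatement of the two identities, and I do not expect any genuine obstacle here; the whole content has been pushed into~(\ref{Szeta}) and~(\ref{MDelta}), whose proofs are deferred to Subsection~\ref{finalsec}. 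The only care needed is to verify that multiplication by $\zeta$ on $S(1-\gamma)$ really does match the commutator action $a\mapsto[a,x]$ on $M$ under $\Theta$, which is immediate from~(\ref{sigmasum}) and the relation $bg\Theta=b\Theta\widehat{g}$ specialized to $g=x$.
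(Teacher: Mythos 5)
Your proposal is correct and follows exactly the paper's route: the paper likewise treats Lemma~\ref{keylem} as an immediate translation of the identities (\ref{Szeta}) and (\ref{MDelta}) through the identification $M\Theta=S(1-\gamma)$ of (\ref{MthetaSzeta}), using the intertwining relation $bg\Theta=b\Theta\widehat{g}$ so that the action of $x-1$ (resp.\ of $X-1$) on $M$ corresponds to multiplication by $\zeta$ (resp.\ to $\Delta$-multiplication) on $S(1-\gamma)$, with all the real content deferred to Subsection~\ref{finalsec}. Your careful reading of $M\cap S\zeta$ as $\Theta^{-1}\bigl(M\Theta\cap S\zeta\bigr)$ is precisely the implicit convention in the paper's statement.
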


We can now establish Proposition \ref{P1}(iii) (this uses only the first claim
of the lemma):

\begin{proposition}
\label{MxiintersectMeta}If $a,~b\in M$ and $a(x-1)=b(y-1)$ then $a=c(y-1)$ and
$b=c(x-1)$ for some $c\in M$. Thus%
\[
M(x-1)\cap M(y-1)=M(x-1)(y-1).
\]

\end{proposition}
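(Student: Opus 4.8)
The plan is to transport the statement into the domain $S$ by means of the embedding $\Theta:M\rightarrow S$, under which the actions of $x-1$ and $y-1$ on $M$ become multiplication by $\zeta$ and $\tau$ respectively: since $bg\Theta=b\Theta\widehat{g}$ we have $b(g-1)\Theta=b\Theta(\widehat{g}-1)$, so $b(x-1)\Theta=b\Theta\zeta$ and $b(y-1)\Theta=b\Theta\tau$. Applying $\Theta$ to the hypothesis $a(x-1)=b(y-1)$ therefore yields $\alpha\zeta=\beta\tau$ in $S$, where $\alpha=a\Theta$ and $\beta=b\Theta$ lie in $M\Theta=S(1-\gamma)$ by (\ref{MthetaSzeta}). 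Because $\Theta$ is injective, it suffices to produce $c\in M$ with $c\Theta\cdot\tau=\alpha$ and $c\Theta\cdot\zeta=\beta$.

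Next I would apply the intersection identity of Lemma \ref{Sasmodule}(ii). The common value $\alpha\zeta=\beta\tau$ lies in $S\zeta\cap S\tau=S\zeta\tau$, so it equals $s\zeta\tau$ for some $s\in S$. Since $S$ is a domain (Lemma \ref{Sasmodule}(i)), cancelling $\zeta$ gives $\alpha=s\tau$, and cancelling $\tau$ gives $\beta=s\zeta$. Thus the desired $c$ must satisfy $c\Theta=s$, and the whole proposition reduces to the single assertion that $s\in S(1-\gamma)=M\Theta$; granting this, we set $c=s\Theta^{-1}\in M$ and read off $a=c(y-1)$, $b=c(x-1)$ from the two displayed equations and the injectivity of $\Theta$.

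The crux, and the step I expect to be the main obstacle, is precisely showing $s\in S(1-\gamma)$. Here I would invoke the $x\leftrightarrow y$ mirror of the first claim of Lemma \ref{keylem}, namely $M\cap S\tau=M(y-1)$, which after translation reads $S(1-\gamma)\cap S\tau=(S\tau)(1-\gamma)$. Now $\alpha=s\tau$ lies both in $S(1-\gamma)$ and in $S\tau$, so $\alpha=(w\tau)(1-\gamma)$ for some $w\in S$. Expanding and using that $\widehat{g}^{\gamma}=\widehat{g}$, hence $\tau\gamma=\tau$, gives $(w\tau)(1-\gamma)=w\tau-(w\gamma)\tau=w(1-\gamma)\tau$, so $s\tau=w(1-\gamma)\tau$; cancelling $\tau$ in the domain $S$ yields $s=w(1-\gamma)\in S(1-\gamma)$, as required. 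The only delicate point is that this uses the structural identity (\ref{Szeta}) (equivalently Lemma \ref{keylem}) together with the $\gamma$-invariance of the $\widehat{g}$; everything else is formal cancellation in the power-series domain $S$.
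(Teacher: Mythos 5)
Your proposal is correct and follows essentially the same route as the paper's own proof: apply the embedding $\Theta$, use $S\zeta\cap S\tau=S\zeta\tau$ (Lemma \ref{Sasmodule}(ii)), cancel in the domain $S$, and pull back to $M$ via the structural identity (\ref{Szeta}). The one (harmless) deviation is that where the paper applies Lemma \ref{keylem} exactly as stated ($M\cap S\zeta=M(x-1)$) to $b\Theta=s\zeta$, you instead apply its $x\leftrightarrow y$ mirror to $a\Theta=s\tau$ to conclude $s\in S(1-\gamma)=M\Theta$; that mirror identity is not stated verbatim in the paper, but it holds by the evident symmetry of the construction --- it is precisely the instance of Lemma \ref{SsandSzeta} with the roles of the two free abelian factors exchanged, the same instance the paper itself invokes when proving Lemma \ref{RsmeetDelta} --- so your argument is sound.
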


\begin{proof}
Suppose $d=a(x-1)=b(y-1)$. Then%
\begin{align*}
b\Theta\tau &  =b(y-1)\Theta\\
&  =a(x-1)\Theta=a\Theta\zeta=s\zeta\tau
\end{align*}
for some $s\in S,$ by (\ref{intersection}). As multiplication by $\tau$ is
injective, Lemma \ref{keylem} shows that%
\[
b\Theta=s\zeta=c(x-1)\Theta
\]
for some $c\in M$. It follows that $b=c(x-1)$, and that%
\[
a\Theta\zeta=c(x-1)\Theta\tau=c\Theta\tau\zeta=c(y-1)\Theta\zeta
\]
whence $a=c(y-1)$. Finally,%
\[
d=b(y-1)=c(x-1)(y-1)\in M(x-1)(y-1).
\]

\end{proof}

\medskip

Now $\Delta$ is the kernel of the augmentation mapping $S\rightarrow R(1)$
that sends $\widehat{X}$ to $\{1\}.$ Let $\pi:S\rightarrow R$ be the
composition of this map with the isomorphism $R(1)\rightarrow R,~r(1)\mapsto
r.$ Thus for $g\in X$ we have%
\begin{align}
g(1)\pi &  =g\label{pi}\\
g(2)\pi &  =g(1)^{-1}\widehat{g}\pi=g^{-1}.\nonumber
\end{align}

According to Lemma \ref{keylem},%
\begin{equation}
\lbrack M,F]=M\cap\Delta=M\cap\ker\pi\text{.} \label{ker_pi}%
\end{equation}
So we may identify $M/[M,F]$ with $S(1-\gamma)\pi\subseteq R$. In particular
this implies the claim of Proposition \ref{P1}(v):

\begin{itemize}
\item $M/[M,F]$ is torsion-free for $\mathbb{Z}_{p}$.
\end{itemize}

\bigskip\ Now denote by $\ast$ the automorphism of $R$ induced by the
inversion automorphism of $X$. It follows from (\ref{pi}) that for $r,$ $s\in
R$ we have%
\[
r(1)s(2)\pi=rs^{\ast}.
\]

Consider elements $\widetilde{u}r,~\widetilde{u}s\in\widetilde{D}$. The image
of $(\widetilde{u}r,\widetilde{u}s)$ in $M\Theta$ is $r(1)s(2)-s(1)r(2).$ This
maps under $\pi$ to $rs^{\ast}-sr^{\ast}$. Thus%

\begin{align}
(\widetilde{u}r,\widetilde{u}s)\pi &  =rs^{\ast}-sr^{\ast}\label{bracketpi}\\
&  =(\widetilde{u}rs^{\ast},\widetilde{u})\pi,\nonumber
\end{align}
whence%
\begin{equation}
(\widetilde{u}r,\widetilde{u}s)\equiv(\widetilde{u}rs^{\ast},\widetilde{u}%
)~(\operatorname{mod}[M,F]). \label{multmodQ}%
\end{equation}

Let
\begin{align*}
R_{+}  &  =\{r\in R\mid r=r^{\ast}\},\\
R_{-}  &  =\{r\in R\mid r=-r^{\ast}\}.
\end{align*}
As $\ast$ is an involution we have $2R\subseteq R_{+}\oplus R_{-}.$

Fix $r\in R_{+}\smallsetminus\{0\}.$ Then for any $s\in R$ we have
\begin{align*}
(\widetilde{u}r,\widetilde{u}s)\pi &  =0\\
&  \Longleftrightarrow r(s^{\ast}-s)=0\\
&  \Longleftrightarrow s\in R_{+}.
\end{align*}

Fix $r\in R_{-}\smallsetminus\{0\}.$ Then for any $s\in R$ we have%
\begin{align*}
(\widetilde{u}r,\widetilde{u}s)\pi &  =0\\
&  \Longleftrightarrow r(s^{\ast}+s)=0\\
&  \Longleftrightarrow s\in R_{-}.
\end{align*}

Thus setting%
\begin{align*}
\widetilde{u}R_{+}  &  =C/D_{2}\\
\widetilde{u}R_{-}  &  =N/D_{2}%
\end{align*}
we have established the following (recall that $Q=[D_{2},F]=[M,F]$): \emph{the
group} $F/Q$ \emph{contains abelian subgroups} $C/Q=\mathrm{C}_{D/Q}(uQ)$
and$~N/Q$ \emph{such that}%
\begin{align*}
D^{2}  &  \subseteq CND_{2}\leq D\\
C\cap N  &  =D_{2},
\end{align*}%
\begin{align*}
a  &  \in C\smallsetminus D_{2}\Longrightarrow\mathrm{C}_{D/Q}(aQ)=C/Q,\\
b  &  \in N\smallsetminus D_{2}\Longrightarrow\mathrm{C}_{D/Q}(bQ)=N/Q.
\end{align*}
This is precisely the content of Proposition \ref{P2}.

\subsection{Group algebra notes\label{finalsec}}

It remains to establish the claims (\ref{Szeta}) and (\ref{MDelta}). To this
end we now forget all the preceding notation and begin anew. Let%

\[
X=\overline{\left\langle x_{1},x_{2}\right\rangle },~X(i)=\overline
{\left\langle x_{i}\right\rangle }%
\]
be free abelian pro-$p$ groups on the exhibited generators.

Set $\xi_{i}=x_{i}-1,~$ $z=x_{1}x_{2},~~\zeta=z-1$.

$T=\mathbb{Z}_{p}[[H]]$, $H$ a f.g. free abelian pro-$p$ group,
\begin{align*}
S  &  :=\mathbb{Z}_{p}[[H\times X]]=T[[X]]=T[[\xi_{1},\xi_{2}]]\\
S(i)  &  :=T[[X(i)]]=T[[\xi_{i}]].
\end{align*}
Then for $i=1,~2$ we have $X=X(i)\times\overline{\left\langle z\right\rangle
}=\overline{\left\langle z~,x_{i}\right\rangle }$ so%
\begin{align*}
S  &  =S(i)[[\zeta]]\\
&  =S\zeta\oplus S(i).
\end{align*}
Define $\pi_{i}:S\rightarrow S(i)\subseteq S$ by
\[
(a\zeta+c)\pi_{i}=c~~~(a\in S,~c\in S(i)).
\]
Now $r\mapsto r\zeta$ is an isomorphism from $S$ to $S\zeta$; let $\zeta^{-1}
$ denote the inverse isomorphism.

As $S(1-\pi_{i})=S\zeta$ we can define $\Psi_{i}:S\rightarrow S$ by%
\[
\Psi_{i}=(1-\pi_{i})\zeta^{-1};
\]
In particular,%
\[
\Psi_{i\left\vert S\zeta\right.  }=\zeta^{-1}.
\]
Now define%
\[
\Psi:S\rightarrow\frac{1}{2}S\subseteq\mathbb{Q}_{p}[[H\times X]]
\]
by%
\[
\Psi=\frac{1}{2}(\Psi_{1}+\Psi_{2}).
\]
Then%
\[
\Psi_{\left\vert S\zeta\right.  }=\zeta^{-1}:S\zeta\rightarrow S.
\]

Next, let $\mathrm{Sym}_{2}$ act on $X$ by permuting $x_{1},x_{2}$. Assume
also that $\mathrm{Sym}_{2}$ acts on $H,$ in some way. This extends to an
action on $S$, that fixes $\zeta$ and permutes the $S(i)$. Let $\gamma
=(12)\in\mathrm{Sym}_{2}$. Thus for%
\[
r=a_{i}\zeta+c_{i}~~(a_{i}\in S,~c_{i}\in S(i),~i=1,~2)
\]
we have%
\[
r^{\gamma}=a_{i}^{\gamma}\zeta+c_{i}^{\gamma}%
\]
so $r^{\gamma}\Psi_{i\gamma}=(r\Psi_{i})^{\gamma}$. Thus%
\begin{align*}
2(r\Psi)^{\gamma} &  =(r\Psi_{1}+r\Psi_{2})^{\gamma}\\
&  =r^{\gamma}\Psi_{2}+r^{\gamma}\Psi_{1}\\
&  =2r^{\gamma}\Psi;
\end{align*}
whence $\Psi$ is $\mathrm{Sym}_{2}$-\emph{equivariant}.

\medskip

Now (\ref{Szeta}) is exactly the statement of the following lemma,
taking$~H:=\overline{\left\langle y_{1},y_{2}\right\rangle }$:

\begin{lemma}
\label{SsandSzeta}Let $\mathbf{s}=1-\gamma\in\mathbb{Z}_{p}[\mathrm{Sym}_{2}%
]$. Then
\[
S\mathbf{s}\cap S\zeta=(S\mathbf{s})\zeta=(S\zeta)\mathbf{s.}%
\]

\end{lemma}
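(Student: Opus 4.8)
The plan is to prove the two asserted equalities separately; the first is formal and the second carries all the content.

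Since $\gamma$ fixes $\zeta$, for every $r\in S$ one has $(r\zeta)\mathbf{s}=r\zeta-(r\zeta)^{\gamma}=(r-r^{\gamma})\zeta=(r\mathbf{s})\zeta$, so $(S\zeta)\mathbf{s}=(S\mathbf{s})\zeta$ with no work. The inclusion $(S\mathbf{s})\zeta\subseteq S\mathbf{s}\cap S\zeta$ is then immediate, since $(S\mathbf{s})\zeta=(S\zeta)\mathbf{s}\subseteq S\mathbf{s}$ while manifestly $(S\mathbf{s})\zeta\subseteq S\zeta$. Everything therefore reduces to the reverse inclusion $S\mathbf{s}\cap S\zeta\subseteq(S\mathbf{s})\zeta$.

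For this I would take $w\in S\mathbf{s}\cap S\zeta$. As $S$ is a domain and $\zeta\neq0$, multiplication by $\zeta$ is injective, so $w=v\zeta$ for a unique $v\in S$, and it suffices to show $v\in S\mathbf{s}$. Now $w\in S\mathbf{s}$ gives $w^{\gamma}=-w$, and $\zeta^{\gamma}=\zeta$ then forces $v^{\gamma}=-v$; the same thing is visible through the author's map, since $v=w\Psi$ (by $\Psi|_{S\zeta}=\zeta^{-1}$) while equivariance of $\Psi$ applied to $w=r\mathbf{s}$ gives $v=(r\Psi)\mathbf{s}$ with $r\Psi\in\tfrac12 S$, exhibiting $v$ simultaneously as an antisymmetric element and as an element of $\tfrac12(S\mathbf{s})$. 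Writing $S^{-}=\{a\in S:a^{\gamma}=-a\}$, the whole lemma thus follows once I prove $S^{-}=S\mathbf{s}$; the inclusion $S\mathbf{s}\subseteq S^{-}$ is clear, so the real point is that every antisymmetric element has the form $r-r^{\gamma}$.

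To obtain $S^{-}\subseteq S\mathbf{s}$ I would exploit that, in the case at hand, $\gamma$ permutes a topological $\mathbb{Z}_{p}$-basis of $S$: it swaps $x_{1},x_{2}$, so $\gamma(\xi_{1})=\xi_{2}$, and on $H$ it is the corresponding coordinate swap, whence in the presentation $S=\mathbb{Z}_{p}[[\xi_{1},\xi_{2},\dots]]$ the monomials are merely permuted by $\gamma$. Splitting this basis into $\gamma$-orbits, an antisymmetric $v=\sum_{m}c_{m}m$ satisfies $c_{m^{\gamma}}=-c_{m}$; on a $\gamma$-fixed monomial this reads $c_{m}=-c_{m}$, forcing $c_{m}=0$ because $\mathbb{Z}_{p}$ is torsion-free. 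Choosing one monomial $m$ from each two-element orbit and letting $h$ be the corresponding partial sum (any formal power series is a legitimate element here), one computes $h-h^{\gamma}=v$, so $v=h\mathbf{s}\in S\mathbf{s}$.

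The main obstacle is the prime $2$. For $p$ odd there is nothing to do, as $\tfrac12 v\in S$ and $v=(\tfrac12 v)\mathbf{s}$. For $p=2$ the equality $S^{-}=S\mathbf{s}$ fails over the finite rings $\mathbb{Z}/2^{k}$—there a $\gamma$-fixed basis vector $m$ yields $2^{k-1}m\in S^{-}$ outside the image of $1-\gamma$—and it is exactly the torsion-freeness of $\mathbb{Z}_{p}$ that saves the statement by annihilating the coefficients on fixed monomials. Two points then need care: that $\gamma$ genuinely permutes a $\mathbb{Z}_{p}$-basis (transparent for the coordinate swap actually used, but false for an arbitrary involution of $H$, where one must instead prove the Tate-cohomology vanishing $\hat{H}^{-1}(\langle\gamma\rangle,S)=0$ directly), and that the orbit/partial-sum construction be organised compatibly with the profinite structure. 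Granting $S^{-}=S\mathbf{s}$, the lemma is immediate from the reduction above.
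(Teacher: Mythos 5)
Your formal reductions are sound: since $\zeta^{\gamma}=\zeta$, one has $(S\zeta)\mathbf{s}=(S\mathbf{s})\zeta\subseteq S\mathbf{s}\cap S\zeta$ for free; an element of $S\mathbf{s}\cap S\zeta$ is $v\zeta$ with $v^{\gamma}=-v$; and for odd $p$ your observation $v=(\tfrac12 v)\mathbf{s}$ finishes the proof in full generality, more simply than the paper does. The genuine gap is at $p=2$: your orbit argument needs $\gamma$ to permute a topological $\mathbb{Z}_p$-basis of $S$, whereas the lemma is stated (and used by the paper) for an \emph{arbitrary} action of $\mathrm{Sym}_2$ on $H$. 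In the paper's derivation of (\ref{MDelta}) from Lemma \ref{SsandSzeta}, the lemma is applied to $S=\mathbb{Z}_p[[H\times X]]$ with $H=Y/\overline{\langle t\rangle}\cong\mathbb{Z}_p$ and $\gamma$ acting on $H$ by \emph{inversion}: writing $T=\mathbb{Z}_p[[H]]=\mathbb{Z}_p[[h]]$, the involution sends $h$ to $-h(1+h)^{-1}$, so no monomial basis is permuted and your argument does not apply. You flag exactly this case yourself and defer it to ``proving $\hat{H}^{-1}(\langle\gamma\rangle,S)=0$ directly,'' but that deferred statement is precisely the content of the lemma in the one case where it is not formal; so as it stands the proposal does not prove the lemma in the generality in which it is asserted and needed.

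The missing step can be filled by pushing your own method down to finite levels: $S=\varprojlim_n\mathbb{Z}_p[G_n]$ with $G_n=(H\times X)/(H\times X)^{p^n}$, and $\gamma$ \emph{does} permute the basis $G_n$ of each finite-level group algebra, so your orbit/torsion-freeness argument gives $\ker(1+\gamma)=\operatorname{im}(1-\gamma)$ there; then a compactness argument (the set of $g$ with $g(1-\gamma)$ equal to the image of a fixed antisymmetric element is a nonempty closed subset at each level, the transition maps respect these sets, and an inverse limit of nonempty compact sets is nonempty) transfers the vanishing to $S$. It is worth saying that your insistence on the prime $2$ is well placed: the paper's own proof, via the equivariant splitting $\Psi=\tfrac12(\Psi_1+\Psi_2)$ of multiplication by $\zeta$, is uniform in the $H$-action but lands in $\tfrac12 S$, so for $p=2$ it literally yields only $2b\in S\mathbf{s}$, and closing that requires the same saturation statement $\{v\in S: v^{\gamma}=-v\}=S\mathbf{s}$ that you isolate. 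Thus your identification of the crux is correct; what is missing is its proof in the case (inversion action on $H$, $p=2$) that the paper actually needs.
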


\begin{proof}
Say $a\mathbf{s}=b\zeta$ ($a,~b\in S$). Then%
\begin{align*}
b  &  =b\zeta\Psi\\
&  =a\Psi-a^{\gamma}\Psi\\
&  =(a\Psi)\mathbf{s}%
\end{align*}
so $b\zeta\in(S\mathbf{s})\zeta$. The rest follows because $c^{\gamma}%
\zeta=(c\zeta)^{\gamma}$ implies $(c\mathbf{s})\zeta=(c\zeta)\mathbf{s}$ for
$c\in S.$
\end{proof}

\medskip

Now let $Y=\overline{\left\langle y_{1},y_{2}\right\rangle }$ be another free
abelian pro-$p$ group on the exhibited generators, set $\eta_{i}=y_{i}-1,~$
$t=y_{1}y_{2},~~\tau=t-1$. Take $\mathrm{Sym}_{2}$ to act on $Y$ by permuting
$y_{1},y_{2}$. This induces an action on $H:=Y/\overline{\left\langle
t\right\rangle }$ (where $\gamma$ acts as inversion).

We have a natural epimorphism%
\[
R:=\mathbb{Z}_{p}[[Y\times X]]\rightarrow S=\mathbb{Z}_{p}[[H\times X]]
\]
with kernel $R\tau$. The preceding lemma now implies that%
\[
R\mathbf{s}\cap(R\tau+R\zeta)\subseteq R\tau+(R\mathbf{s})\zeta.
\]
Suppose that $r,~a,~b\in R$ and $r\mathbf{s}=a\tau+b\zeta$. Then
$r\mathbf{s}=a^{\prime}\tau+b^{\prime}\mathbf{s}\zeta$ for some $a^{\prime}$
and $b^{\prime}$, and then $(r-b^{\prime}\zeta)\mathbf{s}\in R\tau$.

Now Lemma \ref{SsandSzeta}, taking $X$ for $H$ and $Y$ for $X$, gives
$R\mathbf{s}\cap R\tau=R\mathbf{s}\tau$. So $(r-b^{\prime}\zeta)\mathbf{s}%
=c\mathbf{s}\tau$ for some $c.$ Then%
\[
r\mathbf{s}=b^{\prime}\zeta\mathbf{s}+c\mathbf{s}\tau\in(R\tau+R\zeta
)\mathbf{s}.
\]
Thus we have established

\begin{lemma}
\label{RsmeetDelta}Let $\mathbf{s}=1-\gamma\in\mathbb{Z}_{p}[\mathrm{Sym}%
_{2}]$. Then
\[
R\mathbf{s}\cap(R\tau+R\zeta)=(R\tau+R\zeta)\mathbf{s.}%
\]

\end{lemma}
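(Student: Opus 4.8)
The plan is to prove the nontrivial inclusion $R\mathbf{s}\cap(R\tau+R\zeta)\subseteq(R\tau+R\zeta)\mathbf{s}$; the reverse inclusion is immediate, since $\gamma$ fixes both $\tau$ and $\zeta$, so that $(a\tau)\mathbf{s}=(a\mathbf{s})\tau$ and $(b\zeta)\mathbf{s}=(b\mathbf{s})\zeta$ for all $a,b\in R$, giving $(R\tau+R\zeta)\mathbf{s}\subseteq(R\tau+R\zeta)\cap R\mathbf{s}$. For the hard inclusion I would use the natural epimorphism $\rho\colon R\to S$ with kernel $R\tau$ to strip off the variables $\tau$ and $\zeta$ one at a time, invoking Lemma~\ref{SsandSzeta} in each of its two guises.

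Note first that $\rho$ is $\mathrm{Sym}_2$-equivariant, sends $\tau$ to $0$ and $\zeta$ to $\zeta$. Suppose $w=r\mathbf{s}=a\tau+b\zeta$ lies in the left-hand side. Applying $\rho$, equivariance gives $w\rho=(r\rho)\mathbf{s}$, while $\tau\rho=0$ gives $w\rho=(b\rho)\zeta$. Hence $(r\rho)\mathbf{s}\in S\mathbf{s}\cap S\zeta=(S\mathbf{s})\zeta$ by Lemma~\ref{SsandSzeta}, so $(r\rho)\mathbf{s}=(\sigma\mathbf{s})\zeta$ for some $\sigma\in S$. Choosing a preimage $b'\in R$ of $\sigma$ and using $(b'\mathbf{s})\zeta=(b'\zeta)\mathbf{s}$ (again because $\gamma$ fixes $\zeta$), I obtain
\[
(r-b'\zeta)\mathbf{s}=r\mathbf{s}-(b'\zeta)\mathbf{s}\in\ker\rho=R\tau .
\]

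This leaves the residual element $(r-b'\zeta)\mathbf{s}\in R\mathbf{s}\cap R\tau$, which I would control by a second application of Lemma~\ref{SsandSzeta}, now with the two generator pairs interchanged: taking $X$ for $H$ and $Y$ for $X$ turns it into $R\mathbf{s}\cap R\tau=(R\mathbf{s})\tau$, which is legitimate because the $\mathrm{Sym}_2$-action on the ``$H$''-factor is unconstrained in the lemma while permuting $y_1,y_2$ fixes $t$. Thus $(r-b'\zeta)\mathbf{s}=(c\mathbf{s})\tau$ for some $c\in R$, and combining the two steps,
\[
r\mathbf{s}=(b'\zeta)\mathbf{s}+(c\tau)\mathbf{s}=(b'\zeta+c\tau)\mathbf{s}\in(R\tau+R\zeta)\mathbf{s},
\]
as required. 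The genuine content is carried entirely by Lemma~\ref{SsandSzeta}; the only point needing care is that its hypotheses are symmetric enough to be invoked twice — first in the quotient $S$ to extract the $\zeta$-component modulo $\tau$, then in $R$ itself with the variables relabelled to absorb the leftover $\tau$-component — and that the lift $b'$ is harmless, the choice-ambiguity landing precisely in $R\mathbf{s}\cap R\tau$, which the second application governs.
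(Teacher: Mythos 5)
Your proof is correct and follows essentially the same route as the paper: reduce modulo $R\tau$ via the epimorphism $R\to S$ and apply Lemma~\ref{SsandSzeta} there to peel off the $\zeta$-component, then apply Lemma~\ref{SsandSzeta} a second time with the roles of $X$ and $Y$ (equivalently $H$ and $X$) interchanged to absorb the leftover term in $R\mathbf{s}\cap R\tau=(R\mathbf{s})\tau$. The only difference is that you spell out the $\mathrm{Sym}_2$-equivariance of the quotient map and the lifting of $\sigma$ to $b'$, steps the paper leaves implicit.
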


This gives (\ref{MDelta}) (here $R$ is playing the role of $S$ in subsection
\ref{Mnstruct}).

\section{Appendix: Interpreting $R$}

The following material is not needed for the main results, but may be of some
use when we try to generalize these. It might also serve in alternative proof
of Theorem \ref{mainthm}, if we can strengthen it to a bi-interpretation
between the group $F/Q$ and the ring-with-involution $(R,\ast)$; this seems
plausible, but needs more work.

\begin{proposition}
The ring $R$ and its involution $\ast$ are interpretable (with parameters) in
the group $F/Q$.
\end{proposition}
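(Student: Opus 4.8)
The plan is to recover the ring $R$ together with its involution from the group-theoretic data already assembled, using Proposition \ref{P1}(i) as the fundamental bridge. That proposition identifies $M_{1}=D/D_{2}$ with the free cyclic $R$-module on $uD_{2}$, which gives a bijection $R\to M_{1}$, $r\mapsto \widetilde{u}r$. The first task is therefore to exhibit the underlying set of $R$ as a definable quotient (or definable subset of a suitable power) of $F/Q$, and to express the $R$-module operations group-theoretically. Addition in $M_{1}$ is just the group operation in the abelian group $D/D_{2}$, which is definable from the commutator subgroup and its square. The module action of $X=F/D$ on $M_{1}$ is conjugation: $(\widetilde u r)\cdot g = \widetilde u r g = (\widetilde u r)^{g}$ for $g\in X$, and $X$ itself is interpretable as $F/D$. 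So the additive group of $R$ and the $X$-action come essentially for free; the genuine content is to capture \emph{ring multiplication} and the \emph{involution} $\ast$.

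First I would make multiplication definable by exploiting the bilinear bracket $(\,\cdot\,,\cdot\,)$ and the computation $(\widetilde{u}r,\widetilde{u}s)\equiv(\widetilde{u}rs^{\ast},\widetilde{u})\pmod{[M,F]}$ from (\ref{multmodQ}). This is the key identity: it converts a commutator in $F/Q$ into the product $rs^{\ast}$ inside $R$, read off in the quotient $M/[M,F]\cong S(1-\gamma)\pi$, which by Proposition \ref{P1}(v) is a torsion-free $\mathbb{Z}_p$-module sitting inside $R$. Concretely, the commutator map $D\times D\to D_2/Q=M/[M,F]$, $(w_1,w_2)\mapsto [w_1,w_2]$, is group-theoretically definable in $F/Q$, and under the identifications it sends $(\widetilde u r,\widetilde u s)$ to the image of $rs^{\ast}$. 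Taking $s\in R_{+}$ (so $s^{\ast}=s$) one recovers honest products $rs$; more generally, setting one argument to $\widetilde u$ (i.e.\ $r=1$) produces the image of $s^{\ast}$, which simultaneously pins down the involution. Thus both $\ast$ and multiplication are encoded in the single definable bracket together with the definable projection $M\to M/[M,F]$.

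The main obstacle I anticipate is twofold. First, the bracket only computes products \emph{modulo} $[M,F]=Q$-image, i.e.\ it lands in $M/[M,F]$ rather than in $R$ itself; since $M/[M,F]\cong S(1-\gamma)\pi$ is a proper submodule of $R$, I must check that the pairing $(r,s)\mapsto rs^{\ast}-sr^{\ast}$, as $r,s$ range over $R$, determines the full ring structure of $R$ and not merely that of some subquotient. Because $\ast$ is an involution and $2R\subseteq R_{+}\oplus R_{-}$, one can separate the symmetric and antisymmetric parts: for fixed $r$ the vanishing criteria $r(s^{\ast}-s)=0\iff s\in R_{+}$ and $r(s^{\ast}+s)=0\iff s\in R_{-}$ (already recorded in the excerpt) are definable and let me define $R_{+}$ and $R_{-}$ as interpretable subsets, hence define $\ast$ and then reconstruct the full product from its values on each part. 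Second, strictly speaking the identifications $R\cong M_1$ and $M/[M,F]\hookrightarrow R$ depend on choices (of $u$, of generators $x,y$); this is exactly why the statement allows \emph{parameters}. I would therefore fix the parameter set $\{x,y\}$ (equivalently $u=[y,x]$) at the outset and carry out every definition relative to it, verifying only that the resulting interpreted structure is isomorphic to $(R,\ast)$ — isomorphism, not canonicity, being all that interpretability with parameters requires.

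Finally, I would assemble these pieces: define the domain as $M_1=D/D_2$ (interpretable in $F/Q$ since $D=[F,x][F,y]$ is definable by Lemma \ref{xy} and $D_2$ is the centre-related subgroup), define $+$ as the group law there, define the $X\cong F/D$ action by conjugation, and define $\times$ and $\ast$ via the definable commutator bracket into $M/[M,F]$ as above, using the torsion-freeness from Proposition \ref{P1}(ii),(v) to ensure the relevant maps are injective and the recovered operations are well defined. Checking that these definable operations satisfy the ring axioms and that the involution is an anti-automorphism is then a routine (if bookkeeping-heavy) verification against the already-established identities, and I would not grind through it here.
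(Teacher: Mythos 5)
Your identification of the carrier ($R\leftrightarrow D/D_{2}$ via Proposition \ref{P1}(i)), your treatment of addition as the group law, and your route to the involution are all sound and essentially match the paper: the vanishing criteria $r(s^{\ast}-s)=0\iff s\in R_{+}$ (for fixed $0\neq r\in R_{+}$) and the companion criterion for $R_{-}$ are exactly how the paper shows that $\widetilde{u}R_{+}$ and $\widetilde{u}R_{-}$ are the centralizers $\mathrm{C}_{D}(u)/D_{2}$ and $\mathrm{C}_{D}(u^{x}u^{-x^{-1}})/D_{2}$ in $F/Q$, hence definable with parameters; one then defines $s=r^{\ast}$ by $(s+r\in R_{+})\wedge(s-r\in R_{-})$.

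The gap is in multiplication, and it is twofold. First, your computations are off: in $F/Q$ (where $[M,F]=Q$ becomes trivial) the bracket satisfies $(\widetilde{u}r,\widetilde{u}s)=(\widetilde{u}rs^{\ast},\widetilde{u})$ exactly, and under the algebraic embedding of $D_{2}/Q$ into $R$ this element corresponds to $rs^{\ast}-sr^{\ast}$, \emph{not} $rs^{\ast}$; at $r=1$ you get $s^{\ast}-s$, not $s^{\ast}$, and for $s\in R_{+}$ you get $s(r-r^{\ast})$, not $rs$ (honest products, up to a factor of $2$, appear only on mixed pairs from $R_{-}\times R_{+}$). Those slips are repairable. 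What is not repairable as stated is the second point: the bracket takes values in the centre $D_{2}/Q$, a definable set of a different sort from your carrier $D/D_{2}$, and your plan to ``read off'' products requires the identification $M/[M,F]\cong S(1-\gamma)\pi\subseteq R$ to be \emph{definable} in the group. That identification is a piece of abstract module theory established via $\Theta$ and $\pi$; defining it inside $F/Q$ would seem to require precisely the ring multiplication you are trying to construct (the embedding sends $[v,w]$ to the product expression $rs^{\ast}-sr^{\ast}$), so your scheme is circular at this point. The idea your proposal is missing, and which the paper uses, is never to evaluate the bracket as a ring element but only to \emph{compare} bracket values against the definable reference family $\{[\,\cdot\,,u^{g}]\}_{g}$: the relation $(vf)(wf)^{\ast}=zf$ is defined by the formula $\forall g\;[v,w^{g}]=[z,u^{g}]$ (equation (\ref{prodformula})), and this pins down $zf$ because the conjugates $u^{g}$ topologically generate $D$ and $\mathrm{Z}(D)=D_{2}$. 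Without this quantified comparison, or some substitute for the missing definable identification, your construction of the product does not go through.
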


\begin{proof}
From now on I assume that $Q=1,$ and write $F$ in place of $F/Q$ and similarly
for $D$ and $D_{2}=M$, so $[M,F]=1$. Modulo this convention we revert to the
notation of Sections \ref{gtr} and \ref{Mnstruct}. Now we identify $R$ with
\thinspace the definable section $D/D_{2}$ of $F$, via%
\[
r\longmapsto\widetilde{u}r
\]
(Proposition \ref{P1}(i)). In the framework of \cite{H}, Sec. 5.3, this
corresponds to a surjective map $f$ from the definable subgroup $D$ of $F$ to
$R$, such that for $v,w\in D$,%
\begin{align*}
\widetilde{u}(vf)  &  =vD_{2}\\
wf  &  =vf\Longleftrightarrow v^{-1}w\in D_{2},
\end{align*}
$D_{2}$ being definable as the centre of $F$.

Addition in $R$ corresponds to the group operation in $D/D_{2}$, in other
words%
\[
wf+vf=zf\Longleftrightarrow z^{-1}vw\in D_{2}.
\]

Recall that the involution \thinspace$\ast$ extends the inversion mapping on
$X$. At the end of Section \ref{Mnstruct} we defined subgroups%
\begin{align*}
C/D_{2}  &  =\{\widetilde{u}r\mid r=r^{\ast}\},\\
N/D_{2}  &  =\{\widetilde{u}r\mid r=-r^{\ast}\}.
\end{align*}
Now $u\in C$ and $u^{x}.u^{-x^{-1}}\in N$; applying the final statement of
Section \ref{Mnstruct} then shows that $C=\mathrm{C}_{D}(u)$ and
$N=\mathrm{C}_{D}(u^{x}.u^{-x^{-1}})$. Thus $C$ and $D$ are definable in $F$.
The relation $s=r^{\ast}$ in $R$ is now definable as follows:
\[
wf=(zf)^{\ast}\Longleftrightarrow\left(  wz\in C\right)  ~\wedge~(wz^{-1}\in
N).
\]

It remains to define the ring multiplication; for this it suffices to define
the relation $rs^{\ast}=t$. I claim that for $v,~w,~z\in D$, the following
statement is equivalent to $(vf).(wf)^{\ast}=zf$:%
\begin{equation}
\forall g~.[v,w^{g}]=[z,u^{g}]. \label{prodformula}%
\end{equation}

To see this, recall, (\ref{multmodQ}): this shows that for $r,~s\in R$ we
have
\[
(\widetilde{u}r,\widetilde{u}s)=(\widetilde{u}rs^{\ast},\widetilde{u}).
\]
Putting $r=vf$, $s=w^{g}f$ and $rs^{\ast}=z^{g^{-1}}f$ this gives
\[
\lbrack v,w^{g}]=[z^{g^{-1}},u]=[z,u^{g}]
\]
as required. Conversely, let $r=vf$, $s=wf$ and say $rs^{\ast}=tf$ . Now
(\ref{prodformula}) asserts%
\begin{align*}
\lbrack z,u^{g}]  &  = [v,w^{g}]\\
&  =(\widetilde{u}r,\widetilde{u}sg) =(\widetilde{u}rs^{\ast}g^{-1}%
,\widetilde{u})\\
&  =[t^{g^{-1}},u]=[t,u^{g}]
\end{align*}
for all $g\in G$ (in the middle line, interpret $g$ as $gD\in X$). As the
elements $u^{g}$ generate $D$ it follows that $t\equiv z~$modulo
$\mathrm{Z}(D).$ But $\mathrm{Z}(D)=D_{2}$ as is clear from Proposition
\ref{P2}. Thus $zf=tf=rs^{\ast}$ as claimed.
\end{proof}

\end{document}